\newtheorem{lem}{Lemma}[section]
\newtheorem{defi}{Definition}[section]
\newtheorem{thm}[lem]{Theorem}
\newtheorem{cor}[lem]{Corollary}
\newtheorem{remark}{Remark}
\newcommand{\tabincell}[2]{\begin{tabular}{@{}#1@{}}#2\end{tabular}}
\theoremstyle{definition}
\begin{document}
\title{The graphs with exactly two distance eigenvalues different from $-1$ and $-3$\footnote{Supported
by the National Natural Science Foundation of China (Grant Nos. 11261059, 11531011).}}
\author{{\small Lu Lu, \ \ Qiongxiang Huang,\footnote{
Corresponding author.
Email: huangqx@xju.edu.cn}\ \ Xueyi Huang}\\[2mm]\scriptsize
College of Mathematics and Systems Science,
\scriptsize Xinjiang University, Urumqi, Xinjiang 830046, P.R.China}

\date{}
\maketitle {\flushleft\large\bf Abstract} In this paper, we completely characterize the graphs with third largest distance eigenvalue at most $-1$ and smallest distance eigenvalue at least $-3$. In particular, we determine all graphs whose distance matrices have exactly two eigenvalues (counting multiplicity) different from $-1$ and $-3$. It turns out that such graphs consist of three infinite classes, and all of them are determined by their distance spectra. We also show that the friendship graph is determined by its distance spectrum.

\vspace{0.1cm}
\begin{flushleft}
\textbf{Keywords:} distance eigenvalue; distance equitable partition; friendship graph; distance spectral characterization
\end{flushleft}
\textbf{AMS Classification:} 05C50
\section{Introduction}\label{s-1}
Let $G$ be a simple connected graph with vertex set $V(G)=\{v_1,v_2,\ldots,v_n\}$, edge set $E(G)$ and adjacency matrix $A=A(G)$. Denote by $d(v_i,v_j)$  the  \emph{distance} (i.e., the length of a shortest path) between the vertices $v_i$ and $v_j$ of $G$. Then the \emph{diameter} $d(G)$ and \emph{distance matrix} $D(G)$ are defined as $d(G)=\max\{d(v_i,v_j)\mid v_i,v_j\in V(G)\}$ and $D(G)=(d(v_i,v_j))_{n\times n}$, respectively.

Since $D(G)$ is a real symmetric matrix, its all eigenvalues are real and can be conventionally denoted and arranged as $\partial_1\ge\partial_2\ge\cdots\ge\partial_n$.  These eigenvalues are also called the  \emph{distance eigenvalues} of $G$, and the largest one $\partial_1$ is called the \emph{distance spectral radius} of $G$. The \emph{distance spectrum} of the graph $G$, denoted by $\mathrm{Spec}_D(G)$, consists of the  distance eigenvalues  (together with their multiplicities). The graph $G$ is said to be \textit{determined by its distance spectrum} (DDS for short) if, for any graph $H$,  $\mathrm{Spec}_D(H)=\mathrm{Spec}_D(G)$ implies that $H\cong G$. The notions of \emph{adjacency eigenvalue}, \emph{adjacency spectrum} (denoted by $\mathrm{Spec}_A(G)$) and \emph{determined by its adjacency spectrum} (DAS for short) can be similarly defined if we consider the adjacency matrix $A(G)$.

Throughout this paper, we denote by $J_{i\times j}$ the $i\times j$ all-ones matrix, $I_p$ the identity matrix of order $p$ and $N(v)=\{u \in V(G)\mid uv\in E(G)\}$ the \emph{neighbourhood} of  $v\in V(G)$. The \emph{join} of two vertex disjoint graphs $G$ and $H$ is the graph $G\vee H$ obtained by adding all edges with one end vertex in $G$ and the other end vertex in $H$. In addition, all the symbols and notions not mentioned here are standard and can be found in \cite{Dragos}.

The famous question ``Which graphs are determined by their spectra?'' has been raised by G\"{u}nthard and Primas \cite{Gunthard} over half a century and is far from being solved up to now. For surveys of this question see \cite{Dam1,Dam2}. With regard to distance spectrum, it is believed that a mass of graphs are DDS since the distance matrix $D(G)$ contains more information than $A(G)$. However,  just a few of DDS-graphs  are characterized up to now.  Recently, Jin and Zhang \cite{Jin} proved that the complete $k$-partite graph $K_{n_1,\cdots,n_k}$ is DDS; Lin, Zhai and Gong \cite{Huiqiu2} proved that the graph $K_{s,t}^r=K_r\vee (K_s\cup K_t)$ with $r\ge 1$ is DDS.

The famous friendship graph $F_k$ consists of $k$
edge disjoint triangles that all of them meeting in one vertex. In 2010, Wang et al. \cite{Wang} put forward the conjecture that $F_k$ is DAS. This conjecture aroused several activities \cite{Das,Abdollahi} and finally was affirmed by Cioab\u{a} et al. \cite{Sebastian} for $k\neq 16$ (if $k=16$, they also showed that there is  exactly one graph $H$ satisfying $\mathrm{Spec}_A(H)=\mathrm{Spec}_A(F_{k})$ but $H\not\cong F_k$). Actually, Cioab\u{a} et al. characterized all graphs with all but two adjacency eigenvalues equal to $\pm1$ and $F_k$ is just contained in this class.

In this paper,  we first introduce the notion of distance equitable partition and give some basic results about it in section \ref{s-2}. Motivated by the work of Cioab\u{a} et al., we completely characterize those graphs satisfying $-3\le\partial_{n}$ and $\partial_3\le-1$ in section \ref{s-3}. In particular, we determine all graphs with exactly two distance eigenvalues (counting multiplicity) different from $-1$ and $-3$ in section \ref{s-4}. In the process, we show that all these graphs are DDS, and particularly, $F_k$ is DDS.

\section{The distance equitable partition}\label{s-2}
Given a graph $G$, the vertex partition $\Pi$: $V(G)=V_1\cup V_2\cup\cdots\cup V_r$ is said to be an \emph{equitable partition} if, for any $u\in V_i$, $|V_j\cap N(u)|=b_{ij}$ is a constant only dependent on $i,j$ ($1\le i,j\le r$), and the matrix $B_\Pi=(b_{ij})_{k\times k}$ is the so-called \emph{divisor matrix} derived from $\Pi$. Moreover, $B_\Pi$ can be regarded as the adjacency matrix of a directed multigraph $G/\Pi$, which is called the \emph{divisor} of $G$ with respect to $\Pi$. Let $A$ be the  adjacency matrix of $G$, and let $C$ be the \textit{characteristic matrix} with respect to $\Pi$ whose $i$-th column $C_i$ is the characteristic function of $V_i$. Then $AC=CB_\Pi$, and so the columns of $C$ generate an invariant subspace of $A$, which produces  a nice  property: $\det(xI-B_\Pi)|\det(xI-A)$ (see \cite{Dragos}, Theorem 3.9.5). Naturally, we ask if there exists  analogous ``equitable partition'' for the distance matrix of $G$? If it exists, what confuses us is that how it reveals the relation between the eigenvalues and the structure of a graph. In this section, we will introduce the notion of ``equitable partition'' for the distance matrix of  a graph.

Denote by $d(v, S)=\sum_{u\in S}d(u,v)$, where $v\in V(G)$ and $S$ is a nonempty subset of $V(G)$. In terms of $d(v,S)$, we give the following definition of distance equitable partition.

\begin{defi}\emph{Given a connected graph $G$, the vertex partition $\Pi:$ $V(G)=V_1\cup V_2\cup\cdots\cup V_k$ is called a \emph{distance equitable partition} if, for any $v\in V_i$, $d(v,V_j)=b_{ij}$ is a constant only dependent on $i,j$ ($1\le i,j\le k$). Here the matrix $B^*_\Pi=(b_{ij})_{k\times k}$ is called the \emph{distance divisor matrix} of $G$ with respect to $\Pi$.}
\end{defi}

Let $\Gamma$ be the automorphism group of $G$, and $O_1,O_2,\ldots,O_k$ all the orbits of $\Gamma$. Then $\Pi$: $V(G)=O_1\cup O_2\cup\cdots\cup O_k$ is a partition of $V(G)$ which is generally called the \emph{orbit partition} of $G$. It is well known that the orbit partition is an equitable partition and,  fortunately, we have the  following result.

\begin{lem}\label{lem-2-1}
Let $G$ be a connected graph. Then its orbit partition $\Pi:$ $V(G)=O_1\cup O_2\cup\cdots\cup O_k$ is also a distance equitable partition.
\end{lem}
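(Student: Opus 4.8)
The plan is to exploit the defining property of the orbit partition, namely that two vertices lie in the same orbit precisely when some element of $\Gamma$ carries one to the other. First I would recall the elementary but essential fact that every automorphism $\sigma\in\Gamma$ is an isometry of $G$, that is, $d(\sigma(u),\sigma(w))=d(u,w)$ for all $u,w\in V(G)$. This holds because $\sigma$ preserves adjacency, hence maps walks to walks of the same length, and therefore carries shortest $u$--$w$ paths to shortest $\sigma(u)$--$\sigma(w)$ paths; so the two distances coincide.

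Next I would fix an orbit $O_j$ and observe that every $\sigma\in\Gamma$ restricts to a permutation of $O_j$. Indeed, for $u\in O_j$ the image $\sigma(u)$ again lies in the orbit of $u$, so $\sigma(O_j)\subseteq O_j$; since $\sigma$ is injective and $O_j$ is finite, this inclusion is an equality. Thus $\sigma|_{O_j}$ is a bijection of $O_j$ onto itself, which is exactly what is needed to reindex a sum over $O_j$ without changing its value.

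The main step is then to show that $d(v,O_j)$ is constant as $v$ ranges over a fixed orbit $O_i$. Take $v,v'\in O_i$; by definition of the orbit partition there is $\sigma\in\Gamma$ with $\sigma(v)=v'$. Combining the isometry property with the reindexing just described gives
\[
d(v,O_j)=\sum_{u\in O_j}d(v,u)=\sum_{u\in O_j}d(\sigma(v),\sigma(u))=\sum_{u\in O_j}d(v',\sigma(u))=\sum_{w\in O_j}d(v',w)=d(v',O_j).
\]
Hence $d(v,O_j)$ depends only on the indices $i$ and $j$, so it equals a constant $b_{ij}$, which is precisely the condition in the definition of a distance equitable partition.

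I expect no genuine obstacle here: the result is a direct transport-of-structure argument resting entirely on the fact that automorphisms preserve distances. The only point requiring a moment's care is the reindexing of the summation, and that is legitimate exactly because $\sigma$ permutes $O_j$; the remainder is bookkeeping. It is worth noting that the argument uses connectedness of $G$ only to guarantee that all distances $d(v,u)$ are finite and well defined, so that the divisor entries $b_{ij}$ make sense.
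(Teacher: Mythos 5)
Your proof is correct and follows essentially the same route as the paper's: both arguments rest on the fact that automorphisms preserve distances and that any $\sigma\in\Gamma$ permutes each orbit $O_j$, so the sum defining $d(v,O_j)$ can be reindexed to show it is constant on each orbit $O_i$. Your write-up is in fact slightly more careful than the paper's in spelling out why $\sigma(O_j)=O_j$ justifies the reindexing, but the substance is identical.
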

\begin{proof}
Notice that $d(u,v)=d(\sigma(u),\sigma(v))$ for any automorphism $\sigma$ of $G$. For  $v\in O_i$, we have
\begin{equation}\label{D-eq-2}
d(\sigma(v),O_j)=d(\sigma(v),\sigma(O_j))=\sum\limits_{u\in O_j}d(\sigma(v),\sigma(u))=\sum\limits_{u\in O_j}d(v,u)=d(v,O_j).
\end{equation}
Since the automorphism group $\Gamma$ acts transitively on each orbit, $d(v,O_j)$ is a constant independent on the choice of $v\in O_i$. The result follows.
\end{proof}

Now suppose that $\Pi$: $V(G)=V_1\cup V_2\cup\cdots \cup V_k$ is a distance equitable partition of $G$, and $C$ is the characteristic matrix with respect to $\Pi$. Then the distance divisor matrix $B_\Pi^*=(b_{ij})$ (with respect to $\Pi$) leads to a \emph{distance divisor} of $G$, also denoted by $G/\Pi$, which is the directed multigraph with vertices $V_1,V_2,\ldots,V_k$  and $b_{ij}$ arcs from $V_i$ to $V_j$. To compare with (adjacency) equitable partition, we will give some parallel  results for distance equitable partition.

\begin{lem}\label{lem-2-2}
Let $G$ be a connected graph with a distance equitable partition $\Pi:$ $V(G)=V_1\cup V_2\cup\cdots \cup V_k$. Then $DC=CB_\Pi^*$, where $D$ is the distance matrix of $G$, $C$ and $B_\Pi^*$ are the characteristic matrix and distance divisor matrix with respect to $\Pi$, respectively.
\end{lem}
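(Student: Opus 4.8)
The plan is to verify the matrix identity $DC=CB_\Pi^*$ by a direct entrywise comparison, in complete analogy with the argument recalled in Section~\ref{s-2} for the (adjacency) equitable partition, where $AC=CB_\Pi$. The definition of distance equitable partition is tailored precisely so that the block row sums of $D$ are constant on each part, which is exactly what this computation needs.

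First I would fix a vertex $v\in V(G)$ and a block index $j\in\{1,2,\ldots,k\}$ and evaluate the $(v,j)$ entry of $DC$. By definition the $j$-th column of $C$ is the characteristic function of $V_j$, so $(C)_{uj}=1$ when $u\in V_j$ and $(C)_{uj}=0$ otherwise. Hence
$(DC)_{vj}=\sum_{u\in V(G)}d(v,u)\,(C)_{uj}=\sum_{u\in V_j}d(v,u)=d(v,V_j)$,
where I use the notation $d(v,S)=\sum_{u\in S}d(u,v)$ introduced above. Next, since $\Pi$ is a partition, $v$ lies in exactly one block; say $v\in V_i$. The defining property of a distance equitable partition then gives $d(v,V_j)=b_{ij}$, a constant depending only on $i$ and $j$, so $(DC)_{vj}=b_{ij}$.

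On the other side, I would compute the corresponding entry of $CB_\Pi^*$, namely $(CB_\Pi^*)_{vj}=\sum_{l=1}^{k}(C)_{vl}\,b_{lj}$. Because $v$ belongs only to $V_i$, we have $(C)_{vl}=1$ for $l=i$ and $(C)_{vl}=0$ otherwise, so the sum collapses to $b_{ij}$. Comparing the two sides shows that $DC$ and $CB_\Pi^*$ agree in every entry, which establishes the claim.

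I do not expect any genuine obstacle here: the statement follows immediately from unwinding the definition, and the argument runs exactly parallel to the adjacency case, with the quantity $d(v,V_j)$ playing the role that $|V_j\cap N(v)|$ plays for the ordinary equitable partition. The only point requiring a little care is the bookkeeping in the second computation, where one must use that $v$ lies in a \emph{single} block $V_i$ so that the sum defining $(CB_\Pi^*)_{vj}$ reduces to one term.
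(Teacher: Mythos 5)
Your proposal is correct and follows exactly the paper's argument: both compute the $(v,j)$-entry of $DC$ as $\sum_{u\in V_j}d(v,u)=d(v,V_j)=b_{ij}$ and match it with $(CB_\Pi^*)_{vj}=b_{ij}$. You simply spell out the bookkeeping (the indicator structure of $C$ and the collapse of the sum to the single term $l=i$) that the paper leaves implicit.
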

\begin{proof}
Consider the $(v,j)$-entry of $DC$ and $CB_{\Pi}^*$ where $v$ is in $V_i$. On the one hand,
$$(DC)_{vj}=\sum_{u\in V_j}d(v,u)=d(v,V_j)=b_{ij}.$$
On the other hand, $(CB_{\Pi}^*)_{vj}=b_{ij}$. Thus our result follows.
\end{proof}
\begin{thm}\label{thm-2-3}
Let $G$ be a connected graph with distance matrix $D$, and let $\Pi:$ $V=V_1\cup V_2\cup\cdots\cup V_k$ be a distance equitable partition of $G$ with distance divisor matrix $B_{\Pi}^*$. Then $\det(xI-B_{\Pi}^*)|\det(xI-D)$.
\end{thm}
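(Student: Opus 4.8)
The plan is to mimic the classical argument for ordinary equitable partitions, exploiting the relation $DC=CB_\Pi^*$ supplied by Lemma \ref{lem-2-2}. The starting observation is that the characteristic matrix $C$ has full column rank $k$: its columns are the disjointly supported, nonzero indicator vectors of the cells $V_1,\ldots,V_k$, hence linearly independent. Consequently $C^{\mathsf{T}}C=\mathrm{diag}(|V_1|,\ldots,|V_k|)$ is invertible, and the column space $U=\mathrm{col}(C)$ is a $k$-dimensional subspace of $\mathbb{R}^n$ that is $D$-invariant, since $DC=CB_\Pi^*$ forces $DU\subseteq U$.

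First I would record the identity obtained by multiplying $DC=CB_\Pi^*$ on the left by $C^{\mathsf{T}}$, namely $C^{\mathsf{T}}DC=(C^{\mathsf{T}}C)\,B_\Pi^*$. As $C^{\mathsf{T}}C$ is invertible, this already exhibits $B_\Pi^*$ as $(C^{\mathsf{T}}C)^{-1}C^{\mathsf{T}}DC$, a matrix built from the action of $D$ on $U$. To convert this into a statement about characteristic polynomials I would pass to an orthonormal frame: set $\widehat{C}=C(C^{\mathsf{T}}C)^{-1/2}$, so that $\widehat{C}^{\mathsf{T}}\widehat{C}=I_k$ and the columns of $\widehat{C}$ form an orthonormal basis of $U$.

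Next I would extend $\widehat{C}$ to a full orthogonal matrix $Q=[\,\widehat{C}\mid\widehat{C}'\,]$, where the columns of $\widehat{C}'$ form an orthonormal basis of the orthogonal complement $U^{\perp}$. The key structural point, and the step I expect to carry the real content, is that $Q^{\mathsf{T}}DQ$ is block upper-triangular: its bottom-left block equals $\widehat{C}'^{\mathsf{T}}D\widehat{C}=\widehat{C}'^{\mathsf{T}}CB_\Pi^*(C^{\mathsf{T}}C)^{-1/2}$, which vanishes because the columns of $\widehat{C}'$ are orthogonal to $U=\mathrm{col}(C)$, i.e.\ $\widehat{C}'^{\mathsf{T}}C=0$ (equivalently, $U^{\perp}$ is $D$-invariant since $D$ is symmetric and $U$ is $D$-invariant). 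Since $Q$ is orthogonal, $\det(xI-D)=\det(xI-Q^{\mathsf{T}}DQ)$, and the block-triangular shape factors it as $\det(xI_k-\widehat{C}^{\mathsf{T}}D\widehat{C})\cdot\det(xI_{n-k}-\widehat{C}'^{\mathsf{T}}D\widehat{C}')$.

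Finally I would identify the top-left block. Using $C^{\mathsf{T}}DC=(C^{\mathsf{T}}C)B_\Pi^*$ from the first step, a direct computation gives
$$\widehat{C}^{\mathsf{T}}D\widehat{C}=(C^{\mathsf{T}}C)^{-1/2}C^{\mathsf{T}}DC(C^{\mathsf{T}}C)^{-1/2}=(C^{\mathsf{T}}C)^{1/2}B_\Pi^*(C^{\mathsf{T}}C)^{-1/2},$$
so $\widehat{C}^{\mathsf{T}}D\widehat{C}$ is similar to $B_\Pi^*$ and hence shares its characteristic polynomial. Substituting this into the factorization yields $\det(xI-D)=\det(xI-B_\Pi^*)\cdot\det(xI_{n-k}-\widehat{C}'^{\mathsf{T}}D\widehat{C}')$, which gives the desired divisibility $\det(xI-B_\Pi^*)\mid\det(xI-D)$. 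The only point requiring care is the vanishing of the off-diagonal block, which is precisely where the orthogonality $\widehat{C}'^{\mathsf{T}}C=0$ (equivalently the symmetry of $D$) is used; everything else is routine linear algebra.
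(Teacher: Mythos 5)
Your proposal is correct and follows essentially the same route as the paper: both use $DC=CB_\Pi^*$ to exhibit $\mathrm{col}(C)$ as a $D$-invariant subspace, complete $C$ to a basis of $\mathbb{R}^n$, and read the factorization of $\det(xI-D)$ off the resulting block-triangular form. The only difference is that you complete to an orthonormal basis, so your top-left block is $(C^TC)^{1/2}B_\Pi^*(C^TC)^{-1/2}$ (similar to $B_\Pi^*$), whereas the paper completes $C$ to an arbitrary invertible matrix $(C\mid C^*)$ and obtains $B_\Pi^*$ itself in that block; neither the symmetry of $D$ nor the orthonormalization is actually needed.
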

\begin{proof}
Let $C$ be the characteristic matrix with respect to $\Pi$. Clearly, the matrix $C$ has rank $k$. Now we choose a  matrix $C^*$ of order $n\times (n-k)$ such that $(C\mid C^*)$ is an invertible matrix of order $n\times n$, where $n=|V(G)|$. Then there exist two matrices $X$ and $Y$ such that
\begin{equation}\label{Lem-2-1}
DC^*=CX+C^*Y.
\end{equation}
From Lemma \ref{lem-2-2} and (\ref{Lem-2-1}), we obtain
\begin{equation*}
D(C\mid C^*)=(C\mid C^*)\left(
                            \begin{array}{cc}
                              B_{\Pi}^* & X \\
                              0 & Y \\
                            \end{array}
                          \right).
\end{equation*}
It follows that $\det(xI-D)=\det(xI-B_{\Pi}^*)\det(xI-Y)$ since $(C\mid C^*)$ is invertible.
\end{proof}

\begin{cor}\label{cor-2-4}
Let $G$ be a connected graph of order $n$ with distance equitable partition $\Pi$, and $B_{\Pi}^*$ the distance divisor matrix of $G$ with respect to $\Pi$. Then the largest eigenvalue of $B_{\Pi}^*$ is just the distance spectral radius of $G$.
\end{cor}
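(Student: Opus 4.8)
The plan is to combine Theorem~\ref{thm-2-3} with the Perron--Frobenius theorem. Since $G$ is connected with $n\ge 2$ vertices, every off-diagonal entry of $D$ equals a distance $d(v_i,v_j)\ge 1$, so $D$ is a nonnegative symmetric matrix with strictly positive off-diagonal entries; in particular $D$ is irreducible. By Perron--Frobenius, the distance spectral radius $\partial_1$ is then a simple eigenvalue of $D$ admitting a strictly positive eigenvector, and it is the unique eigenvalue of $D$ possessing a nonnegative (equivalently, positive) eigenvector. This characterization is what I would use to pin down $\partial_1$.

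First I would record the easy inequality. By Theorem~\ref{thm-2-3}, $\det(xI-B_{\Pi}^*)\mid\det(xI-D)$, so every eigenvalue of $B_{\Pi}^*$ is an eigenvalue of $D$; in particular the largest eigenvalue $\mu$ of $B_{\Pi}^*$ satisfies $\mu\le\partial_1$. It then remains to establish the reverse inequality, which I would obtain by exhibiting $\partial_1$ itself as an eigenvalue of $B_{\Pi}^*$.

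Next I would analyze $B_{\Pi}^*$ directly. For $i\neq j$ one has $b_{ij}=d(v,V_j)=\sum_{u\in V_j}d(v,u)>0$, because $V_j$ is nonempty and every vertex of $V_j$ lies at positive distance from $v\in V_i$; hence $B_{\Pi}^*$ is an irreducible nonnegative matrix. Applying Perron--Frobenius to $B_{\Pi}^*$, its largest eigenvalue $\mu$ admits a strictly positive eigenvector $\mathbf{y}$, so $B_{\Pi}^*\mathbf{y}=\mu\mathbf{y}$. Multiplying by the characteristic matrix $C$ and invoking Lemma~\ref{lem-2-2} gives
\[
D(C\mathbf{y})=(DC)\mathbf{y}=(CB_{\Pi}^*)\mathbf{y}=C(\mu\mathbf{y})=\mu\,(C\mathbf{y}).
\]
Because the parts $V_1,\ldots,V_k$ cover $V(G)$ and $\mathbf{y}>0$, the vector $C\mathbf{y}$ is strictly positive, so $\mu$ is an eigenvalue of $D$ with a positive eigenvector.

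Finally I would close the argument by uniqueness: since $D$ is irreducible and nonnegative, the only eigenvalue admitting a positive eigenvector is $\partial_1$, forcing $\mu=\partial_1$. The main (and essentially only) subtle point is the justification that both $D$ and $B_{\Pi}^*$ are irreducible, so that Perron--Frobenius applies, together with the fact that a positive eigenvector pins down the Perron root; the remaining steps are immediate from Lemma~\ref{lem-2-2} and Theorem~\ref{thm-2-3}.
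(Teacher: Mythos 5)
Your argument is correct and follows essentially the same route as the paper: take the Perron eigenvector $\mathbf{y}>0$ of $B_{\Pi}^*$, lift it to the positive vector $C\mathbf{y}$, use Lemma~\ref{lem-2-2} to see that $C\mathbf{y}$ is an eigenvector of $D$ for the same eigenvalue, and conclude by the Perron--Frobenius uniqueness of the eigenvalue with a positive eigenvector. Your extra remarks (irreducibility of $B_{\Pi}^*$ and the redundant inequality $\mu\le\partial_1$ via Theorem~\ref{thm-2-3}) only make explicit what the paper leaves implicit.
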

\begin{proof}
Let $\lambda$ be the largest eigenvalue of $B_{\Pi}^*$ with eigenvector $\mathbf{x}$, then
$B_{\Pi}^*\mathbf{x}=\lambda\mathbf{x}$. By Perron-Frobenius Theorem, we may assume that $\mathbf{x}>0$. Putting $\mathbf{y}=C\mathbf{x}$, where $C$ is the characteristic matrix with respect to $\Pi$. From Lemma \ref{lem-2-2}, we have
$$D\mathbf{y}=D(C\mathbf{x})=(DC)\mathbf{x}=(CB_{\Pi}^*)\mathbf{x}=C(B_{\Pi}^*\mathbf{x})=C(\lambda\mathbf{x})=\lambda(C\mathbf{x})=\lambda\mathbf{y}.$$
Thus $\mathbf{y}$ is an eigenvector of $D$ and, by Perron-Frobenius Theorem again, $\lambda$ is the distance spectral radius of $G$ because $\mathbf{y}$ is positive.
\end{proof}

\section{The graphs with distance spectrum $[\partial_n,\partial_3]\subseteq[-3,-1]$}\label{s-3}
There are some results about the smallest distance eigenvalue $\partial_n$ of a graph. Recently, Yu \cite{Guanglong} proved that $\partial_n(G)\le-2.383$ when $G$ is neither a complete graph nor a complete $k$-partite graph. In this section, we will  characterize those graphs satisfying $-3\le\partial_n$ and $\partial_3\le-1$.
\begin{lem}\label{lem-3-1}
(Cauchy Interlace Theorem). Let $A$ be a Hermitian matrix with order $n$, and
let $B$ be a principal submatrix of $A$ with order $m$. If $\lambda_1(A)\ge\lambda_2(A)\ge\cdots\ge\lambda_n(A)$ lists the eigenvalues of $A$ and $\mu_1(B)\ge\mu_2(B)\ge\cdots\ge\mu_m(B)$ the eigenvalues of $B$, then
$$\lambda_{n-m+i}(A)\le\mu_i(B)\le\lambda_i(A)\textrm{ for }i=1,\ldots,m.$$
\end{lem}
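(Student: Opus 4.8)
The plan is to derive the interlacing inequalities from the Courant--Fischer variational characterization of the eigenvalues of a Hermitian matrix, which I take as the essential tool. First, since permuting the rows and columns of $A$ by the same permutation leaves its eigenvalues unchanged and carries one principal submatrix to another, I may assume without loss of generality that $B$ is the leading $m\times m$ principal submatrix of $A$. I then identify $\mathbb{C}^m$ with the subspace $U=\{x\in\mathbb{C}^n: x_{m+1}=\cdots=x_n=0\}$ of $\mathbb{C}^n$; for $y\in\mathbb{C}^m$ with corresponding $x\in U$ one has the crucial identities $y^*By=x^*Ax$ and $y^*y=x^*x$, so the Rayleigh quotient of $B$ at $y$ equals that of $A$ at $x$.

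For the upper bound, I would apply the max--min form
$$\mu_i(B)=\max_{\dim S=i}\ \min_{0\ne y\in S}\frac{y^*By}{y^*y},$$
where $S$ ranges over the $i$-dimensional subspaces of $\mathbb{C}^m$. Each such $S$ corresponds to an $i$-dimensional subspace of $U\subseteq\mathbb{C}^n$, so the maximum defining $\mu_i(B)$ is taken over a subfamily of the $i$-dimensional subspaces of $\mathbb{C}^n$ that occur in the analogous formula for $\lambda_i(A)$. Enlarging the domain of a maximum can only increase it, which yields $\mu_i(B)\le\lambda_i(A)$.

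For the lower bound, I would instead use the min--max form
$$\mu_i(B)=\min_{\dim T=m-i+1}\ \max_{0\ne y\in T}\frac{y^*By}{y^*y},$$
and compare it with the corresponding expression $\lambda_{n-m+i}(A)=\min_{\dim W=m-i+1}\max_{0\ne x\in W}\frac{x^*Ax}{x^*x}$, noting that the required subspace dimension $n-(n-m+i)+1=m-i+1$ matches. Here the minimum for $\mu_i(B)$ runs only over subspaces lying in $U$, a subfamily of those for $\lambda_{n-m+i}(A)$; since restricting the domain of a minimum can only increase it, $\lambda_{n-m+i}(A)\le\mu_i(B)$. Combining the two bounds gives the claim for all $i=1,\ldots,m$.

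The main point requiring care is the index bookkeeping in the lower bound: one must invoke the min--max (rather than the max--min) characterization and verify that the matching subspace dimension $m-i+1$ produces exactly the index $n-m+i$ on $\lambda$. An alternative route that avoids the explicit min--max for general $m$ is to first establish the single-deletion case $m=n-1$ (ordinary Cauchy interlacing, where the two bounds read $\lambda_{i+1}(A)\le\mu_i(B)\le\lambda_i(A)$) and then iterate the deletion $n-m$ times, accumulating the index shifts; this trades the index computation for a short induction, but the Courant--Fischer argument above is the most direct.
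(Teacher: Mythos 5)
Your argument is a correct and complete proof: the reduction to a leading principal submatrix, the identification of $\mathbb{C}^m$ with the coordinate subspace $U$, and the use of the max--min form for the upper bound and the min--max form for the lower bound are all sound, and the index check $n-(n-m+i)+1=m-i+1$ is exactly right. The paper, however, states this lemma without any proof at all, simply quoting it as the classical Cauchy Interlace Theorem, so there is no argument in the paper to compare yours against; what you have written is the standard Courant--Fischer proof of that classical result, and your remark that one could instead induct on single-row-and-column deletions is also a valid alternative.
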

Let $H$ be a connected induced subgraph of the connected graph $G$. Then $A(H)$ must be a principle submatrix of $A(G)$, while $D(H)$ may not be a principal submatrix of $D(G)$. For example, the path $P_4$ is an induced subgraph of the cycle $C_5$, and the distance matrix of $P_4$ and $C_5$ are respectively given by
$$
D(P_4)=\left(
       \begin{array}{cccc}
         0 & 1 & 2 & 3 \\
         1 & 0 & 1 & 2 \\
         2 & 1 & 0 & 1 \\
         3 & 2 & 1 & 0 \\
       \end{array}
     \right),
D(C_5)=\left(
         \begin{array}{ccccc}
           0 & 1 & 2 & 2 & 1 \\
           1 & 0 & 1 & 2 & 2 \\
           2 & 1 & 0 & 1 & 2 \\
           2 & 2 & 1 & 0 & 1 \\
           1 & 2 & 2 & 1 & 0 \\
         \end{array}
       \right).
$$
Obviously, $D(P_4)$ is not a principal submatrix of $D(C_5)$. Nevertheless, if $H$ is an induced subgraph of $G$ with diameter $d(H)<3$, $D(H)$ must be a principal submatrix of $D(G)$. In fact, for any $u,v\in H$, $d_H(u,v)\le d(H)<3$. This implies that either $u$ and $v$ are adjacent or they are not adjacent but have a common neighbor in $H$. If $u$ and $v$ are adjacent in $H$, then they are adjacent in $G$, and so $d_H(u,v)=d_G(u,v)=1$. If $u$ and $v$ are not adjacent but have a common neighbor in $H$, then they are not adjacent in $G$ and still have a common neighbor because $H$ is an induced subgraph of $G$, and so $d_H(u,v)=d_G(u,v)=2$. Thus for any $u,v\in H$, we have $d_H(u,v)=d_G(u,v)$, which implies that $D(H)$ is a principal submatrix of $D(G)$. By Lemma \ref{lem-3-1}, we have the following result.
\begin{lem}\label{lem-3-2}
If $H$ is a connected induced subgraph of $G$ with diameter $d(H)<3$, then the eigenvalues of $D(H)$ interlace that of $D(G)$.
\end{lem}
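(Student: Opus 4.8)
The plan is to reduce the statement to the Cauchy Interlace Theorem (Lemma \ref{lem-3-1}) by showing that, under the hypothesis $d(H)<3$, the matrix $D(H)$ is a principal submatrix of $D(G)$. The crux is that distances measured within $H$ coincide with distances measured within $G$ whenever they do not exceed $2$.

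First I would verify this distance-preservation claim. For $u,v\in V(H)$, the condition $d(H)<3$ forces $d_H(u,v)\in\{0,1,2\}$. When $d_H(u,v)=1$, the vertices $u$ and $v$ are adjacent in $H$, hence adjacent in $G$ since edges are inherited, so $d_G(u,v)=1$. When $d_H(u,v)=2$, the vertices $u$ and $v$ are nonadjacent in $H$ but share a common neighbor $w\in V(H)$; because $H$ is an \emph{induced} subgraph, $uv\notin E(G)$ as well, while $w$ remains a common neighbor in $G$, giving $d_G(u,v)=2$. The case $u=v$ is trivial. Hence $d_H(u,v)=d_G(u,v)$ for all $u,v\in V(H)$.

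Once this equality is in hand, the rows and columns of $D(G)$ indexed by $V(H)$ are precisely the entries of $D(H)$, so $D(H)$ is a principal submatrix of $D(G)$. Applying Lemma \ref{lem-3-1} with $A=D(G)$ of order $n=|V(G)|$ and $B=D(H)$ of order $m=|V(H)|$ then yields the interlacing inequalities $\partial_{n-m+i}(G)\le\partial_i(H)\le\partial_i(G)$ for $i=1,\ldots,m$, which is exactly the assertion.

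The main obstacle—indeed the only nontrivial point—is the distance-preservation argument in the distance-$2$ case. Here one must genuinely use that $H$ is induced rather than an arbitrary subgraph, both to inherit nonadjacency of $u,v$ into $G$ and to ensure the common neighbor witnessing distance $2$ survives. If $H$ were allowed diameter $3$ or more this reasoning would collapse, exactly as the $P_4\subseteq C_5$ example preceding the lemma demonstrates, where $d_{P_4}(u,v)=3$ but the corresponding distance in $C_5$ is only $2$.
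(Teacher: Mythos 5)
Your proposal is correct and follows exactly the paper's own argument: you show $d_H(u,v)=d_G(u,v)$ for all $u,v\in V(H)$ by case analysis on $d_H(u,v)\in\{0,1,2\}$, using that $H$ is induced to preserve both nonadjacency and the common neighbour in the distance-$2$ case, conclude that $D(H)$ is a principal submatrix of $D(G)$, and invoke the Cauchy Interlace Theorem (Lemma \ref{lem-3-1}). No gaps; your closing remark about the $P_4\subseteq C_5$ example matches the paper's motivation for the diameter hypothesis.
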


\begin{cor}\label{cor-3-3}
Let $G_1$ be a connected graph with diameter $d(G_1)<3$. If $G=K_r\vee G_1$, then the eigenvalues of $D(G_1)$ interlace that of $D(G)$.
\end{cor}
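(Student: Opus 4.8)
The plan is to obtain this as an immediate consequence of Lemma \ref{lem-3-2}, taking $H = G_1$ to be an induced subgraph of $G = K_r \vee G_1$. The whole argument then reduces to verifying that $G_1$ satisfies the hypotheses of that lemma, namely that it is a connected induced subgraph of $G$ with diameter less than $3$.

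First I would check that $G_1$ is an induced subgraph of $G$. By definition of the join, forming $K_r \vee G_1$ adds only those edges running between a vertex of $K_r$ and a vertex of $G_1$; no new edge is placed among the vertices of $G_1$ itself. Consequently the subgraph of $G$ induced on $V(G_1)$ is exactly $G_1$. The connectivity of $G_1$ and the diameter bound $d(G_1) < 3$ are assumed in the statement, so every hypothesis of Lemma \ref{lem-3-2} is in place.

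Applying Lemma \ref{lem-3-2} with $H = G_1$ then gives at once that the eigenvalues of $D(G_1)$ interlace those of $D(G)$, which is the desired conclusion. I do not anticipate a genuine obstacle here: the only delicate point is that $D(G_1)$ must be a true principal submatrix of $D(G)$ so that Cauchy interlacing (Lemma \ref{lem-3-1}) applies, and this is precisely what the diameter hypothesis secures and what is already absorbed into the proof of Lemma \ref{lem-3-2} — since every distance in $G_1$ is at most $2$, it is preserved unchanged when $G_1$ is viewed inside $G$.
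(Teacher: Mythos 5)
Your proof is correct and matches the paper's intent exactly: the paper states this corollary without proof as an immediate consequence of Lemma \ref{lem-3-2}, relying on precisely the observation you make, namely that the join adds no edges within $V(G_1)$, so $G_1$ is a connected induced subgraph of $G$ of diameter less than $3$ and the lemma applies directly.
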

The distance eigenvalues are closely linked to the structure of a graph. In fact, some special structure of a graph can lead to some special distance eigenvalues. Conversely, some special distance eigenvalues also can determine some special structure of a graph.
\begin{lem}\label{lem-3-4}
Let $G$ be a connected graph on $n$ vertices. If $S=\{v_1,\ldots,v_p\}$ $(p\ge2)$ induces a clique of $G$ with $N(v_i)\setminus S=N(v_j)\setminus S$ for $1\le i,j\le p$. Then $-1$ is an eigenvalue of $D(G)$ with multiplicity at least $p-1$.
\end{lem}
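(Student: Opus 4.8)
The plan is to exhibit a $(p-1)$-dimensional space of eigenvectors of $D=D(G)$ associated to the eigenvalue $-1$. The starting point is the structural observation that the hypotheses make the vertices of $S$ mutually interchangeable. Writing $T=N(v_1)\setminus S$, the clique condition gives $S\setminus\{v_i\}\subseteq N(v_i)$, and the assumption $N(v_i)\setminus S=T$ for all $i$ yields the common closed neighbourhood $N[v_i]=S\cup T$, independent of $i$; thus any two vertices $v_i,v_j\in S$ are ``true twins''. I would first prove the key distance identity: for every $w\in V(G)\setminus S$ the value $d(v_i,w)$ does not depend on $i$. One clean way is to note that a shortest path from $v_i$ to such a $w$ must leave $S$ through some vertex $t\in T$, and since $v_i$ is adjacent to every vertex of $T$ one gets $d(v_i,w)=1+\min_{t\in T}d(t,w)$, an expression manifestly independent of $i$ (alternatively, invoke the standard fact that vertices with equal closed neighbourhoods are equidistant from every other vertex).

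With this in hand, the rows of $D$ indexed by $v_i$ and $v_j$ agree in every column except the two columns indexed by $v_i$ and $v_j$ themselves, where the entries $0$ and $1$ are swapped. This makes the eigenvector construction transparent. Consider the subspace
\[
W=\Big\{\mathbf{x}\in\mathbb{R}^{n}\ :\ x_w=0\ \text{for }w\notin S,\ \textstyle\sum_{w\in S}x_w=0\Big\},
\]
which has dimension $p-1$. I would then verify $D\mathbf{x}=-\mathbf{x}$ for every $\mathbf{x}\in W$ by computing $(D\mathbf{x})_u=\sum_{w\in S}d(u,w)\,x_w$ in two cases. If $u\notin S$, the distance identity gives $d(u,w)=d(u,v_1)$ for all $w\in S$, so $(D\mathbf{x})_u=d(u,v_1)\sum_{w\in S}x_w=0=-x_u$. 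If $u=v_i\in S$, then $d(v_i,w)=1$ for $w\in S\setminus\{v_i\}$ and $d(v_i,v_i)=0$, whence $(D\mathbf{x})_{v_i}=\sum_{w\in S}x_w-x_{v_i}=-x_{v_i}$. Hence every $\mathbf{x}\in W$ is an eigenvector for $-1$, and $\dim W=p-1$ gives the stated multiplicity bound.

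The only real content is the distance identity of the first paragraph; once the rows of $D$ restricted to $S$ are understood, the eigenvalue computation is routine linear algebra. I expect the main obstacle to be a careful justification that a shortest path from a clique vertex to an outside vertex can always be rerouted through the common external neighbourhood $T$ without changing its length — this is where both the clique hypothesis and the common-neighbourhood hypothesis are genuinely used, and it is worth recording the two inequalities $d(v_i,w)\le 1+\min_{t\in T}d(t,w)$ and $d(v_i,w)\ge 1+\min_{t\in T}d(t,w)$ explicitly.
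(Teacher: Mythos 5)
Your proposal is correct and follows essentially the same route as the paper: both exhibit the $(p-1)$-dimensional space of vectors supported on $S$ with zero sum (the paper uses the basis $e_{v_1}-e_{v_i}$) and check the eigenvalue equation using the fact that $d(v_i,w)$ is independent of $i$ for $w\notin S$. The only difference is that you justify this distance identity by rerouting shortest paths through the common neighbourhood $T$, whereas the paper simply asserts it; your added detail is sound.
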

\begin{proof}
According to our assumption, we can suppose that  $N= N(v_i)\setminus S=\{u_1,u_2,\ldots,u_q\}$ where  $i=1,2,\ldots,p$.  Set $T=V(G)\setminus(S\cup N)=\{w_1,w_2,\ldots,w_{n-p-q}\}$. Then $V(G)=S\cup N\cup T$ is a vertex partition of $G$.  Since $d(v_i,u_j)=1$, the submatrix $D(S,N)$ of $D(G)$ induced on the row set $S$ and the column set $N$ equals to $J_{p\times q}$. Similarly, since $d(v_i,v_j)=1$, we have $D(S,S)=J_{p\times p}-I_p$. Furthermore, for $j\in \{1,2,\dots,n-p-q\}$ we see that $d(v_i,w_j)=a_j$ for any $i\in \{1,2,\dots,p\}$. Hence the submatrix $D(S,T)$ can be written as
$$D(S,T)=\begin{array}{c@{}c@{}c@{}c@{}c@{}c@{}c@{}}
     w_1 & w_2 & \cdots&w_{n-p-q}&  \\
                     \left(\begin{array}{c} a_1 \\ a_1\\\vdots \\ a_1 \end{array}\right.
                    & \begin{array}{c} a_2\\a_2\\\vdots \\a_2 \end{array}
                    & \begin{array}{c} \cdots \\ \cdots \\ \ddots\\\cdots \end{array}
                    & \begin{array}{c} a_{n-p-q}\\a_{n-p-q}\\\vdots \\a_{n-p-q} \end{array}
                    & \left)\begin{array}{c} v_1 \\v_2 \\ \vdots \\v_p \end{array}\right.
  \end{array}$$
Now putting $A=D(S,T)$, $X=D(N,N)$, $Y=D(N,T)$ and $Z=D(T,T)$, the distance matrix $D(G)$ can be written as
$$D(G)=\begin{array}{c@{}c@{}c@{}c@{}c@{}c@{}}
     S & N & T & \\
    \left(\begin{array}{c} J_{p\times p}-I_p \\ J_{q\times p} \\ A^T \end{array}\right.
                    & \begin{array}{c} J_{p\times q} \\X \\ Y^T\end{array}
                         & \begin{array}{c} A \\ Y \\ Z \end{array}
                          & \left)\begin{array}{c} S\\ N\\T\end{array}\right.
  \end{array}.$$
For $i=2,\ldots,p$, let $x^{(i)}\in \mathbb{R}^n$ be the vector defined on $V(G)$ with $x^{(i)}_{v_1}=1$, $x^{(i)}_{v_i}=-1$ and $x^{(i)}_v=0$ for $v\ne v_1, v_i$. Since $A^T(x^{(i)}_{v_1},\dots,x^{(i)}_{v_p})^T=0$, we have $D(G)x^{(i)}=(-1)x^{(i)}$. Moreover, $x^{(2)},x^{(3)},\ldots,x^{(p)}$ are linearly independent. Thus the result follows.
\end{proof}
\begin{lem}\label{lem-3-5}
Let $G$ be a connected graph on $n$ vertices. If $S=mK_r$ $(m\ge2)$ is an induced subgraph of $G$ with $N(u)\setminus V(S)=N(v)\setminus V(S)$ for any $u,v\in V(S)$, then $-(r+1)$ is an eigenvalue of $D(G)$ with multiplicity at least $m-1$.
\end{lem}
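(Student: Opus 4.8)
The plan is to mirror the construction used in the proof of Lemma~\ref{lem-3-4}: I would exhibit $m-1$ linearly independent eigenvectors of $D(G)$ for the eigenvalue $-(r+1)$, each supported on $V(S)$. Write $S=C_1\cup C_2\cup\cdots\cup C_m$ with every $C_i\cong K_r$, put $N=N(u)\setminus V(S)$ for the common outer neighbourhood, and set $T=V(G)\setminus V(S)$.

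First I would pin down the distances inside and out of $S$. Any two vertices of a single clique $C_i$ are adjacent, so their distance is $1$. For $u\in C_i$ and $v\in C_j$ with $i\neq j$ they are nonadjacent; since $G$ is connected and $m\ge 2$, the set $N$ is nonempty, and every $w\in N$ is a common neighbour of $u$ and $v$, whence $d(u,v)=2$. The essential technical point is the claim that for each $w\in T$ the value $d(u,w)$ does not depend on the choice of $u\in V(S)$; denote this common value by $a_w$. I would prove it by noting that the only edges leaving $S$ end in $N$ and that every vertex of $V(S)$ is adjacent to all of $N$; consequently a shortest path from $u$ to $w$ must step out of $S$ immediately (otherwise it could be shortcut through $N$), and replacing its starting vertex $u$ by any other $u'\in V(S)$ produces a walk of equal length, giving $d(u',w)\le d(u,w)$ and, by symmetry, equality.

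With these distances the block form of $D(G)$ becomes transparent. Ordering the vertices clique by clique, the $V(S)\times V(S)$ block equals $2J_{mr\times mr}-I_m\otimes(J_{r\times r}+I_r)$, and the $V(S)\times T$ block equals $\mathbf 1\,\mathbf a^{\!\top}$ with $\mathbf a=(a_w)_{w\in T}$. For a vector $x$ supported on $V(S)$ with restriction $x_S$, the $T$-rows of $D(G)x$ read $a_w(\mathbf 1^{\!\top}x_S)$ and hence vanish exactly when $\mathbf 1^{\!\top}x_S=0$ (note $a_w\ge 1>0$ since the vertices are distinct). Under this condition the term $2J_{mr\times mr}x_S$ drops out on the $V(S)$-rows, leaving $D(G)x=-\bigl(I_m\otimes(J_{r\times r}+I_r)\bigr)x$. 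Since $J_{r\times r}+I_r$ attains its eigenvalue $r+1$ precisely on the constant vector $\mathbf 1_r$, I would take $x_S$ to be the constant $c_i$ on each $C_i$ subject to $\sum_{i=1}^m c_i=0$; every such $x$ then satisfies $D(G)x=-(r+1)x$, and the single linear constraint leaves an $(m-1)$-dimensional space of them.

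I expect the main obstacle to be the independence-of-$u$ claim for $d(u,w)$, that is, confirming that the common outer neighbourhood $N$ forces every vertex of $S$ to reach the rest of $G$ along paths of identical length; once this is secured, the eigenvector count is a routine specialisation of the computation in Lemma~\ref{lem-3-4}.
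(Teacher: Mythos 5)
Your proposal is correct and follows essentially the same route as the paper: both exhibit an $(m-1)$-dimensional space of eigenvectors supported on $V(S)$, constant on each clique with the clique-values summing to zero, using the common outer neighbourhood to annihilate the rows outside $S$ (the paper just picks the explicit basis with $+1$ on the first clique and $-1$ on the $i$-th). Your extra verification that $d(u,w)$ is independent of $u\in V(S)$ is a detail the paper asserts without proof, and it is argued correctly.
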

\begin{proof}
First we partition the vertices of $S$ as $V(S)=V(S_1)\cup V(S_2)\cup\cdots\cup V(S_m)$ such that each $V(S_i)$ induces a $K_r$ in $G$, where $i=1,\ldots,m$. Assume that $V(S_i)=\{v^{(i)}_1,\ldots,v^{(i)}_r\}$, $N=N(v)\setminus V(S)=\{u_1,\ldots,u_q\}$ for any $v\in S$ and $T=V(G)\setminus(N\cup V(S))=\{w_1,\ldots,w_{n-q-mr}\}$, like the proof of Lemma \ref{lem-3-4}, the distance matrix $D(G)$ can be written as
$$D(G)=\begin{array}{@{}r@{}c@{}c@{}c@{}l@{}}
    V(S) & N & T & \\
               \left( \begin{array}{c} B \\ J_{q\times mr} \\ A^T \end{array}\right.
                    & \begin{array}{c} J_{mr\times q} \\X \\ Y^T\end{array}
                         & \begin{array}{c} A \\ Y \\ Z \end{array}
                          & \left)\begin{array}{c} V(S)\\ N\\T  \end{array}\right.
  \end{array}$$
where
$$
B=\begin{array}{c@{}c@{}c@{}c@{}c@{}c@{}c@{}}
     V(S_1) & V(S_2) & \cdots & V(S_m)& \\
    \left(\begin{array}{c} J_{r\times r}-I_r \\ 2J_{r\times r} \\ \vdots\\ 2J_{r\times r} \end{array}\right.
                    & \begin{array}{c} 2J_{r\times r} \\ J_{r\times r}-I_r \\ \vdots \\ 2J_{r\times r} \end{array}
                         & \begin{array}{c} \cdots \\ \cdots \\ \ddots\\ \cdots \end{array}
                         & \begin{array}{c} 2J_{r\times r} \\ 2J_{r\times r} \\ \vdots \\ J_{r\times r}-I_r \end{array}
                          & \left)\begin{array}{c} V(S_1)\\ V(S_2)\\ \vdots \\ V(S_m) \end{array}\right.
  \end{array}, A=\begin{array}{c@{}c@{}c@{}c@{}c@{}c@{}c@{}}
     w_1 & w_2 & \cdots&w_{n-q-mr} & \\
     \left(\begin{array}{c} a_1 \\ a_1\\\vdots \\ a_1 \end{array}\right.
                    & \begin{array}{c} a_2\\a_2\\\vdots \\a_2 \end{array}
                    & \begin{array}{c} \cdots \\ \cdots \\ \ddots\\\cdots \end{array}
                    & \begin{array}{c} a_{n-q-mr}\\a_{n-q-mr}\\\vdots \\a_{n-q-mr} \end{array}
                    & \left)\begin{array}{c}v^{(1)}_1 \\v^{(1)}_2  \\ \vdots\\ v^{(m)}_r  \end{array}\right.
  \end{array}
$$

For $i=2,\ldots,m$, let $y^{(i)}\in \mathbb{R}^n$ be the vector defined on $V(G)$ with $y^{(i)}_{v^{(1)}_j}=1$, $y^{(i)}_{v^{(i)}_j}=-1$ for $j=1,\ldots,r$ and $y^{(i)}_v=0$ for $v\not\in\{v^{(1)}_j,v^{(i)}_j\mid 1\le j\le r\}$. Since $B(y^{(i)}_{v^{(1)}_1},\dots,y^{(i)}_{v^{(m)}_r})^T=-(r+1)(y^{(i)}_{v^{(1)}_1},\dots,y^{(i)}_{v^{(m)}_r})^T$ and $A^T(y^{(i)}_{v^{(1)}_1},\dots,y^{(i)}_{v^{(m)}_r})^T=0$, we have $D(G)y^{(i)}=-(r+1)y^{(i)}$. Moreover, $y^{(2)},\dots,y^{(m)}$ are linear independent. Thus the result follows.
\end{proof}

If $S=\{v_1,\ldots,v_p\}$ $(p\ge2)$ is an independent set of the connected graph $G$ with $N(v_i)=N(v_j)$ for $1\le i,j\le p$, we say that $S$ is a \emph{star independent set} of  order $p$. The following two corollaries are special cases of Lemma \ref{lem-3-5} for  $r=1$ and $r=2$, respectively.

\begin{cor}\label{cor-3-6}
Let $G$ be a connected graph. If $G$ contains a star independent set of order $p$, then $-2$ is an eigenvalue of $D(G)$ with multiplicity at least $p-1$.
\end{cor}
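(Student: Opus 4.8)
The plan is to recognize Corollary~\ref{cor-3-6} as precisely the instance $r=1$ of Lemma~\ref{lem-3-5}, so that the entire statement follows by specializing the hypothesis and reading off the conclusion; no genuinely new idea is needed. First I would note that $K_1$ is a single vertex, so $S=pK_1$ is exactly an independent set of $p$ vertices, matching $m=p$ in the notation of Lemma~\ref{lem-3-5}. The remaining task is to check that the two hypotheses coincide.

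The key observation is that for an \emph{independent} set $S$ we have $N(v_i)\cap V(S)=\emptyset$ for every $v_i\in S$, whence $N(v_i)=N(v_i)\setminus V(S)$. Therefore the condition $N(u)\setminus V(S)=N(v)\setminus V(S)$ for all $u,v\in V(S)$, required in Lemma~\ref{lem-3-5}, reduces to $N(v_i)=N(v_j)$ for $1\le i,j\le p$, which is exactly the defining property of a star independent set of order $p$. Applying Lemma~\ref{lem-3-5} with $r=1$ and $m=p$ then yields that $-(r+1)=-2$ is an eigenvalue of $D(G)$ with multiplicity at least $m-1=p-1$, as claimed.

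Since this is a direct specialization, the only ``obstacle'' is the bookkeeping of matching the two sets of conventions; there is no substantive difficulty. For completeness one could instead give a short self-contained argument mirroring the proof of Lemma~\ref{lem-3-4}. Writing $N=N(v_i)=\{u_1,\ldots,u_q\}$ and $T=V(G)\setminus(S\cup N)$, the fact that distinct members of $S$ are nonadjacent but share the common neighbor $u_1$ gives $d(v_i,v_j)=2$ for $i\ne j$, so the distance submatrix on $S$ equals $2(J_p-I_p)$; moreover $d(v_i,u)=1$ for $u\in N$ makes the $S$--$N$ block all-ones, and $d(v_i,w)$ is constant across $i$ for each $w\in T$, so the rows of the $S$--$T$ block indexed by $S$ are all equal. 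For $i=2,\ldots,p$ one then takes the vectors $x^{(i)}=e_{v_1}-e_{v_i}$ supported on $S$: the identity $2(J_p-I_p)(e_{v_1}-e_{v_i})=-2(e_{v_1}-e_{v_i})$ handles the $S$-block, the all-ones $S$--$N$ block annihilates $e_{v_1}-e_{v_i}$, and the constant-row $S$--$T$ block does likewise, so $D(G)x^{(i)}=-2x^{(i)}$. As the $x^{(i)}$ are linearly independent, $-2$ has multiplicity at least $p-1$.
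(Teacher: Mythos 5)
Your proposal is correct and matches the paper exactly: the paper states Corollary~\ref{cor-3-6} as the special case $r=1$ of Lemma~\ref{lem-3-5} with no further argument, and your verification that the hypotheses coincide (an independent set satisfies $N(v)=N(v)\setminus V(S)$, so the lemma's condition reduces to the definition of a star independent set) is precisely the required bookkeeping. The supplementary self-contained eigenvector argument is a correct bonus but not needed.
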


\begin{cor}\label{cor-3-7}
Let $G$ be a connected graph. If $S=mK_2$ $(m\ge2)$ is an induced subgraph of $G$ with $N(u)\setminus V(S)=N(v)\setminus V(S)$ for any $u,v\in V(S)$, then $-3$ is an eigenvalue of $D(G)$ with multiplicity at least $m-1$.
\end{cor}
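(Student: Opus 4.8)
The plan is to recognize that Corollary~\ref{cor-3-7} is precisely the special case $r=2$ of Lemma~\ref{lem-3-5}. Indeed, $mK_2$ is exactly $mK_r$ with $r=2$; the hypothesis $N(u)\setminus V(S)=N(v)\setminus V(S)$ for all $u,v\in V(S)$ is identical in both statements; and the conclusion of Lemma~\ref{lem-3-5} reads $-(r+1)=-(2+1)=-3$. Hence one applies Lemma~\ref{lem-3-5} verbatim and reads off that $-3$ is a distance eigenvalue of $G$ with multiplicity at least $m-1$. There is essentially no obstacle here: the only thing to check is that the two sets of hypotheses coincide under the substitution $r=2$, which is immediate.

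For completeness, and to see concretely why the eigenvalue is $-3$, I would also spell out the underlying construction in this case. First partition $V(S)=V(S_1)\cup\cdots\cup V(S_m)$ so that each $V(S_i)=\{v_1^{(i)},v_2^{(i)}\}$ induces a copy of $K_2$, and write $N=N(v)\setminus V(S)$ (common to all $v\in V(S)$ by hypothesis) and $T=V(G)\setminus(N\cup V(S))$. The key distance facts are: within one pair $d(v_1^{(i)},v_2^{(i)})=1$; across two distinct pairs the two vertices are non-adjacent in $G$ but share the whole common neighbourhood $N$, so their distance is $2$; and for each $w\in T$ the distance $d(v,w)$ is the same constant for every $v\in V(S)$, because all vertices of $S$ reach $T$ only through the shared set $N$. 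Consequently, in the block form of $D(G)$ used in Lemma~\ref{lem-3-5}, the $V(S)$-diagonal blocks are $J_{2\times 2}-I_2$, the off-diagonal blocks are $2J_{2\times 2}$, and the block $A=D(V(S),T)$ has constant columns.

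The main computation---and the only step requiring care---is to verify the eigenvectors. For $i=2,\ldots,m$ take $y^{(i)}$ equal to $+1$ on $V(S_1)$, $-1$ on $V(S_i)$, and $0$ elsewhere. Restricted to $V(S)$ this vector is scaled by $-3$ under the block $B$ above: at a vertex of $V(S_1)$ the $B$-row contributes $1$ (from its $K_2$-partner) minus $4$ (from the two entries of $V(S_i)$, each at distance $2$), giving $-3$ against the entry $+1$; a symmetric count at a vertex of $V(S_i)$ gives $+3$ against the entry $-1$; and all other coordinates of $B y^{(i)}_{V(S)}$ vanish. Since the $V(S)$-entries of $y^{(i)}$ sum to zero and both the $N$-row block $J_{q\times mr}$ and the $T$-row block $A^{T}$ (whose columns are constant) act on $y^{(i)}$ only through that sum, these blocks send $y^{(i)}$ to $0$; hence $D(G)y^{(i)}=-3\,y^{(i)}$. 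Finally the vectors $y^{(2)},\ldots,y^{(m)}$ have disjoint supports on the $V(S_i)$ and are therefore linearly independent, yielding multiplicity at least $m-1$. This mirrors exactly the argument of Lemma~\ref{lem-3-5} with $r=2$, and confirms the result.
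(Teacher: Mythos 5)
Your proposal is correct and matches the paper exactly: the paper derives Corollary~\ref{cor-3-7} simply by specializing Lemma~\ref{lem-3-5} to $r=2$, which is precisely what you do, and your explicit eigenvector verification reproduces the computation already contained in the proof of that lemma.
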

Let $G$ be a graph with vertex set $V(G)$. For any $X\subseteq V(G)$, we say that $X$ is \emph{$G$-connected if the subgraph $G[X]$ of $G$ induced by $X$ is connected}.
\begin{lem}[\cite{Seinsche}]\label{lem-3-8}
Let $G$ be a graph. The following statements are equivalent:\\
(1) $G$ has no induced subgraph isomorphic to $P_4$.\\
(2) Every subset of $V(G)$ with more than one element is not $G$-connected or not $\bar{G}$-connected.
\end{lem}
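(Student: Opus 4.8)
The plan is to prove the two implications separately, exploiting throughout the fact that $P_4$ is self-complementary, i.e. $\overline{P_4}\cong P_4$. Concretely, if a four-element set $X=\{a,b,c,d\}$ induces the path $a\,b\,c\,d$ in $G$, then the same four vertices induce the path $c\,a\,d\,b$ in $\overline{G}$; hence $G[X]\cong P_4$ if and only if $\overline{G}[X]\cong P_4$. This symmetry lets me treat the hypothesis ``$P_4$-free'' as invariant under complementation, which is the organising principle of the whole argument.

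For the direction $(2)\Rightarrow(1)$ I would argue by contraposition. If $G$ has an induced $P_4$ on a vertex set $X$ with $|X|=4$, then $G[X]\cong P_4$ is connected and, by the self-complementarity above, $\overline{G}[X]\cong P_4$ is also connected; so $X$ is simultaneously $G$-connected and $\overline{G}$-connected, which violates $(2)$. Thus $(2)$ forces $G$ to be $P_4$-free.

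For the harder direction $(1)\Rightarrow(2)$, the first step is to note that being $P_4$-free is hereditary: every induced subgraph $G[X]$ of a $P_4$-free graph is again $P_4$-free. Hence it suffices to prove the local dichotomy \emph{if $G$ is $P_4$-free and $|V(G)|\ge 2$, then $G$ or $\overline{G}$ is disconnected} and then apply it to each $G[X]$ with $|X|\ge 2$. I would establish this dichotomy by induction on $n=|V(G)|$, using that $\overline{G}$ is $P_4$-free whenever $G$ is. The base case $n=2$ is immediate. For $n\ge 3$, suppose for contradiction that both $G$ and $\overline{G}$ are connected, and pick any vertex $v$. Since $G-v$ is $P_4$-free on $n-1\ge 2$ vertices, the induction hypothesis gives that $G-v$ or $\overline{G-v}$ is disconnected; by the complementation symmetry (run the argument in $\overline{G}$ if needed, noting $\overline{G}-v=\overline{G-v}$) it is enough to treat the case in which $G-v$ itself is disconnected, with components $C_1,\dots,C_k$, $k\ge 2$.

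The crux is to manufacture an induced $P_4$ from this disconnection. Because $\overline{G}$ is connected, $v$ is not adjacent to all other vertices, so it has a non-neighbour $b$, lying in some component $C_i$. Since $G$ is connected, $v$ has at least one neighbour in every component; in particular it has a neighbour in $C_i$, and walking along a path inside the connected graph $C_i$ from such a neighbour to $b$ yields an edge $ab$ with $a\in N(v)$ and $b\notin N(v)$. Choosing a neighbour $c$ of $v$ in a different component $C_j$ ($j\ne i$), the vertices $b,a,v,c$ induce the path $b\,a\,v\,c$: the edges $ba,av,vc$ are present, while $bv$, $bc$, $ac$ are all non-edges (the last two because $b,a\in C_i$ while $c\in C_j$). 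This induced $P_4$ contradicts $P_4$-freeness and closes the induction. I expect this construction of $b\,a\,v\,c$ — and in particular the bookkeeping guaranteeing a neighbour/non-neighbour edge within one component together with a neighbour in another — to be the main obstacle; once it is secured, the self-complementary symmetry dispatches the remaining case and the reduction to induced subgraphs completes $(1)\Rightarrow(2)$.
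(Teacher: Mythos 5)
Your proof is correct. Note that the paper itself gives no proof of this lemma --- it is quoted verbatim from Seinsche's 1974 paper --- so there is no in-paper argument to compare against; what you have written is essentially the standard proof of the cograph decomposition theorem. Both directions check out: the contrapositive of $(2)\Rightarrow(1)$ via self-complementarity of $P_4$ (using $\overline{G}[X]=\overline{G[X]}$), and the reduction of $(1)\Rightarrow(2)$ to the dichotomy ``$G$ or $\overline{G}$ is disconnected'' by heredity of $P_4$-freeness, settled by induction. The key construction is sound: with $G-v$ disconnected and both $G,\overline{G}$ connected, $v$ has a neighbour in every component and at least one non-neighbour, and scanning a path inside the component containing that non-neighbour yields an edge from $N(v)$ to its complement, whence $b\,a\,v\,c$ is an induced $P_4$ (the cross-component pairs $bc$, $ac$ are non-edges by definition of components of $G-v$). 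One cosmetic point: you reuse the letter $b$ for both the chosen non-neighbour of $v$ and the endpoint of the neighbour/non-neighbour edge found along the path; these need not be the same vertex, so a fresh symbol would be cleaner, but the argument is unaffected.
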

Let $G$ be a connected graph containing no induced $P_4$. Then $V(G)$ is a subset of itself and so is $G$-connected, by Lemma \ref{lem-3-8}, we know that $\bar{G}$ is disconnected. Therefore, we get the following result.
\begin{lem}\label{lem-3-9}
If $G\not=K_r$ is a connected graph containing no induced $P_4$, then $G$ must be a join of two graphs, i.e., $G\cong G_1\vee G_2$, where $G_1$ and $G_2$ are non-null.
\end{lem}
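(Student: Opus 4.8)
The plan is to invoke Seinsche's characterization (Lemma~\ref{lem-3-8}) applied to the entire vertex set $X=V(G)$, and then to translate the statement ``$\bar{G}$ is disconnected'' into ``$G$ is a join''. Since $G$ contains no induced $P_4$, condition~(1) of Lemma~\ref{lem-3-8} holds, so condition~(2) holds as well: every subset of $V(G)$ with more than one element fails to be $G$-connected or fails to be $\bar{G}$-connected.

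First I would record the order of $G$. Because $G\neq K_r$ is connected and hence non-complete, it cannot have only one or two vertices, since the only connected graphs on at most two vertices are $K_1$ and $K_2$, both of which are complete. Thus $|V(G)|\geq 3>1$, and $V(G)$ is a legitimate subset to feed into condition~(2) of Lemma~\ref{lem-3-8}.

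Next I would apply condition~(2) to $X=V(G)$ itself. As $G$ is connected, the induced subgraph $G[V(G)]=G$ is connected, i.e.\ $V(G)$ is $G$-connected. Condition~(2) then forces $V(G)$ to be \emph{not} $\bar{G}$-connected, which means precisely that the complement $\bar{G}$ is disconnected. I would write $\bar{G}=H_1\cup H_2$ as a disjoint union of two non-null induced subgraphs (for instance, one connected component of $\bar{G}$ together with everything else). Taking complements and using the identity $\overline{H_1\cup H_2}=\bar{H_1}\vee\bar{H_2}$ gives $G=\bar{H_1}\vee\bar{H_2}$; setting $G_1=\bar{H_1}$ and $G_2=\bar{H_2}$ yields the desired join decomposition into non-null graphs.

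This proof is short, being essentially a direct corollary of Lemma~\ref{lem-3-8}, so I do not expect a substantive obstacle. The only points requiring care are verifying $|V(G)|>1$ so that Lemma~\ref{lem-3-8} is applicable, and checking the direction of the equivalence: the $G$-connectivity of $V(G)$ is automatic from the connectedness of $G$, which is what pins the failure in condition~(2) onto $\bar{G}$ rather than onto $G$.
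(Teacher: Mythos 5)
Your proposal is correct and follows essentially the same route as the paper: apply Lemma~\ref{lem-3-8} to $X=V(G)$, use the connectedness of $G$ to conclude that $\bar{G}$ is disconnected, and translate that into a join decomposition. The only difference is that you explicitly verify $|V(G)|>1$, a minor point the paper leaves implicit.
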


From Lemma \ref{lem-3-9}, we know that the diameter of a non-complete connected graph containing no induced $P_4$ is two. However, a graph with diameter two may contain induced $P_4$ such as the cycle $C_5$.

Denote by $\mathcal{G}[-3\le\partial_n,\partial_3\le-1]$ the set of non-complete connected graphs of order $n$ ($n\ge4$) satisfying  $-3\le\partial_n(G)$ and $\partial_3(G)\le-1$. In the following, we try to characterize the graphs in $\mathcal{G}[-3\le\partial_n,\partial_3\le-1]$. We start with a list of forbidden induced subgraphs shown in Fig.\ref{fig-1}.
\begin{lem}\label{lem-3-10}
No graph in $\mathcal{G}[-3\le\partial_n,\partial_3\le-1]$ has one of the graphs $P_4$, $C_5$, $H_i$ ($i=0,1,\ldots,6$) and $I_j$ ($j=1,2,3,4$) (shown in Fig. \ref{fig-1}) as an induced subgraph.
\end{lem}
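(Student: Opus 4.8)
The plan is to lean on the interlacing result of Lemma~\ref{lem-3-2}: every forbidden graph other than $P_4$ has diameter at most $2$, so whenever it occurs as an induced subgraph of $G$ its distance matrix is a principal submatrix of $D(G)$, and its eigenvalues must interlace those of $D(G)$. For such a forbidden graph $H$ with $m$ vertices and distance eigenvalues $\mu_1\ge\cdots\ge\mu_m$, interlacing gives $\mu_3\le\partial_3(G)$ and $\partial_n(G)\le\mu_m$; hence it suffices to verify, for each fixed small $H$, that either $\mu_3>-1$ or $\mu_m<-3$, since then $G$ could not satisfy both $\partial_3\le-1$ and $-3\le\partial_n$.

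First I would dispatch $C_5,H_0,\dots,H_6,I_1,\dots,I_4$ by this routine spectral check. For instance $D(C_5)$ is circulant with eigenvalues $6$, $\tfrac{-3+\sqrt5}{2}$ (twice) and $\tfrac{-3-\sqrt5}{2}$ (twice), so $\mu_3=\tfrac{-3+\sqrt5}{2}>-1$, which already forces $\partial_3(G)>-1$. The remaining graphs are handled identically: the forbidden list is engineered precisely so that each member violates one of the two thresholds, and the computation of each characteristic polynomial produces the contradiction with $-3\le\partial_n$ or $\partial_3\le-1$.

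The real obstacle is $P_4$, whose diameter is $3$, so Lemma~\ref{lem-3-2} does not apply verbatim. Suppose $v_1v_2v_3v_4$ induces a $P_4$ in $G$; being induced, $v_1\not\sim v_4$, so $d_G(v_1,v_4)\in\{2,3\}$. If $d_G(v_1,v_4)=3$, the mutual distances of $v_1,v_2,v_3,v_4$ in $G$ coincide with those in $P_4$, so $D(P_4)$ \emph{is} a principal submatrix of $D(G)$; its smallest eigenvalue is $-2-\sqrt2<-3$, whence $\partial_n(G)\le-2-\sqrt2<-3$ by Lemma~\ref{lem-3-1}, a contradiction.

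The delicate case is $d_G(v_1,v_4)=2$, where the $4\times4$ submatrix on $\{v_1,v_2,v_3,v_4\}$ lies inside the admissible spectral window and yields no contradiction. Here I would choose a common neighbour $w$ of $v_1$ and $v_4$; since $v_2\sim v_1$, $v_2\not\sim v_4$, $v_3\sim v_4$ and $v_3\not\sim v_1$, such a $w$ lies outside $\{v_1,v_2,v_3,v_4\}$. The only undetermined adjacencies are $wv_2$ and $wv_3$, giving four induced subgraphs on $\{v_1,v_2,v_3,v_4,w\}$, each of diameter $2$: if $w$ is adjacent to neither $v_2$ nor $v_3$ one obtains $C_5$, and the other three options reproduce graphs already among the list $H_0,\dots,H_6$. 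As all of these were excluded in the first step, $G$ cannot contain an induced $P_4$ either. Thus the argument is a bootstrap—exclude the diameter-$2$ graphs by interlacing, then reduce the recalcitrant $d_G(v_1,v_4)=2$ configuration of $P_4$ to one of them—and this reduction is the step I expect to require the most care.
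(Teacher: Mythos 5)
Your proposal is correct and follows essentially the same route as the paper: interlacing (Lemma~\ref{lem-3-2}) disposes of the diameter-$2$ forbidden graphs, and $P_4$ is handled by splitting on whether $v_1,v_4$ have a common neighbour, reducing the common-neighbour case to an induced $C_5$, $H_0$ or $H_1$. As a minor bonus, your exact value $-2-\sqrt{2}\approx-3.41$ for the least distance eigenvalue of $P_4$ is correct, whereas the figure's $-3.14$ appears to be a digit transposition.
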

\begin{proof}
Let $G\in\mathcal{G}[-3\le\partial_n,\partial_3\le-1]$.
Since the diameters of $C_5$, $H_i$ ($i=0,1,\ldots,6$) and $I_j$ ($j=1,2,3,4$) are all less than $3$, and each of these graphs has its third largest distance eigenvalue $\partial_3$ strictly greater than $-1$ or its smallest distance eigenvalue $\partial_n$ strictly less than $-3$, by Lemma \ref{lem-3-2}, none of $C_5$, $H_i$ $(i=0,\ldots,6)$ and $I_j$ ($j=1,2,3,4$) can be an induced subgraph of $G$. In the following, it suffices to show that $P_4$ cannot be an induced subgraph of $G$.

By contradiction, assume that $G$ contains induced $P_4=v_1v_2v_3v_4$. If $N(v_1)\cap N(v_4)=\emptyset$, then $D(P_4)$ is a principal submatrix of $D(G)$. By Lemma \ref{lem-3-1}, $-3\le\partial_n(G)\le\lambda_4(D(P_4))=-3.14$, a contradiction. Next, we assume that there exists $v\in N(v_1)\cap N(v_4)$. If $v\not\sim v_2$ and $v\not\sim v_3$, then $C_5$ will be an induced subgraph of $G$, a contradiction. If $v\sim v_2$ and $v\not\sim v_3$ (see $H_0$ in Fig.\ref{fig-1}), or $v\not\sim v_2$ and $v\sim v_3$, then $H_0$ will be an induced subgraph of $G$, a contradiction. If $v\sim v_2$ and $v\sim v_3$ (see $H_1$ in Fig.\ref{fig-1}), then $H_1$ will be an induced subgraph of $G$, a contradiction. Now we complete the proof.
\end{proof}

\begin{figure}[t]
  \centering
\unitlength 3.7mm 
\linethickness{0.4pt}
\ifx\plotpoint\undefined\newsavebox{\plotpoint}\fi %
\begin{picture}(33.5,22.25)(0,0)
\thicklines
\put(.25,21){\line(0,-1){3}}
\put(.25,18){\line(1,0){6}}
\put(6.25,18){\line(0,1){3}}
\put(9.25,21){\line(0,-1){3}}
\put(9.25,18){\line(1,0){6}}
\put(15.25,18){\line(0,1){3}}
\multiput(15.25,21)(-.1,.0333333){30}{\line(-1,0){.1}}
\multiput(12.25,22)(-.1,-.0333333){30}{\line(-1,0){.1}}
\put(18.25,22){\line(1,0){6}}
\put(24.25,22){\line(-3,-4){3}}
\put(21.25,18){\line(-3,4){3}}
\multiput(20.25,22)(.0333333,-.1333333){30}{\line(0,-1){.1333333}}
\put(27.25,22){\line(1,0){6}}
\put(33.25,22){\line(-3,-4){3}}
\put(30.25,18){\line(-3,4){3}}
\multiput(29.25,22)(.0333333,-.1333333){30}{\line(0,-1){.1333333}}
\multiput(30.25,18)(.0333333,.1333333){30}{\line(0,1){.1333333}}
\put(.25,14){\line(1,0){6}}
\put(6.25,14){\line(0,-1){4}}
\put(6.25,10){\line(-1,0){6}}
\put(.25,10){\line(0,1){4}}
\put(.25,14){\line(3,-2){6}}
\put(6.25,14){\line(-3,-2){6}}
\put(.25,10){\line(3,4){3}}
\put(3.25,14){\line(0,-1){4}}
\put(3.25,10){\line(3,4){3}}
\put(3.25,10){\line(-3,4){3}}
\put(3.25,14){\line(3,-4){3}}
\put(9.25,14){\line(1,0){4}}
\multiput(13.25,14)(-.03333333,-.06666667){60}{\line(0,-1){.06666667}}
\multiput(11.25,10)(-.03333333,.06666667){60}{\line(0,1){.06666667}}
\put(11.25,10){\line(1,1){4}}
\put(11.25,14){\line(0,-1){4}}
\put(18.25,14){\line(1,0){6}}
\put(24.25,14){\line(0,-1){4}}
\put(24.25,10){\line(-3,4){3}}
\put(21.25,14){\line(-3,-4){3}}
\put(18.25,10){\line(0,1){4}}
\put(18.25,14){\line(3,-2){6}}
\put(24.25,14){\line(-3,-2){6}}
\put(27.25,14){\line(1,0){3}}
\multiput(30.25,14)(-.0333333,-.0333333){30}{\line(0,-1){.0333333}}
\multiput(29.25,13)(-.0666667,.0333333){30}{\line(-1,0){.0666667}}
\multiput(27.25,14)(.03333333,-.06666667){60}{\line(0,-1){.06666667}}
\put(29.25,10){\line(0,1){3}}
\multiput(30.25,14)(-.0333333,-.1333333){30}{\line(0,-1){.1333333}}
\multiput(29.25,10)(.03333333,.06666667){60}{\line(0,1){.06666667}}
\put(31.25,14){\line(1,0){2}}
\put(33.25,14){\line(-1,-1){4}}
\multiput(3.25,6)(-.1,-.0333333){30}{\line(-1,0){.1}}
\put(.25,5){\line(0,-1){2}}
\multiput(.25,3)(.0666667,-.0333333){30}{\line(1,0){.0666667}}
\put(2.25,2){\line(1,0){2}}
\multiput(4.25,2)(.0666667,.0333333){30}{\line(1,0){.0666667}}
\put(6.25,3){\line(0,1){2}}
\multiput(6.25,5)(-.1,.0333333){30}{\line(-1,0){.1}}
\put(3.25,6){\line(1,0){3}}
\multiput(3.25,6)(-.03370787,-.03370787){89}{\line(0,-1){.03370787}}
\put(.25,3){\line(1,0){6}}
\multiput(6.25,3)(-.03370787,.03370787){89}{\line(0,1){.03370787}}
\put(.25,5){\line(1,0){6}}
\put(6.25,5){\line(-3,-1){6}}
\put(.25,5){\line(3,-1){6}}
\multiput(.25,5)(.03333333,-.05){60}{\line(0,-1){.05}}
\multiput(2.25,2)(.0333333,.1333333){30}{\line(0,1){.1333333}}
\multiput(3.25,6)(.0333333,-.1333333){30}{\line(0,-1){.1333333}}
\multiput(4.25,2)(.03333333,.05){60}{\line(0,1){.05}}
\put(6.25,5){\line(-4,-3){4}}
\put(.25,5){\line(4,-3){4}}
\multiput(.25,3)(.1333333,-.0333333){30}{\line(1,0){.1333333}}
\multiput(6.25,3)(-.1333333,-.0333333){30}{\line(-1,0){.1333333}}
\multiput(11.25,6)(-.0666667,-.0333333){30}{\line(-1,0){.0666667}}
\put(9.25,5){\line(0,-1){3}}
\put(9.25,2){\line(1,0){4}}
\put(13.25,2){\line(0,1){3}}
\multiput(13.25,5)(-.0666667,.0333333){30}{\line(-1,0){.0666667}}
\multiput(11.25,6)(.03333333,-.06666667){60}{\line(0,-1){.06666667}}
\put(13.25,2){\line(-4,3){4}}
\put(9.25,5){\line(1,0){4}}
\put(13.25,5){\line(-4,-3){4}}
\multiput(9.25,2)(.03333333,.06666667){60}{\line(0,1){.06666667}}
\multiput(13.25,5)(.0666667,.0333333){30}{\line(1,0){.0666667}}
\multiput(13.25,5)(.03333333,-.03333333){60}{\line(0,-1){.03333333}}
\multiput(20.25,6)(-.0666667,-.0333333){30}{\line(-1,0){.0666667}}
\put(18.25,5){\line(0,-1){2}}
\multiput(18.25,3)(.0666667,-.0333333){30}{\line(1,0){.0666667}}
\put(20.25,2){\line(1,0){2}}
\multiput(22.25,2)(.0666667,.0333333){30}{\line(1,0){.0666667}}
\put(24.25,3){\line(0,1){2}}
\multiput(24.25,5)(-.0666667,.0333333){30}{\line(-1,0){.0666667}}
\put(22.25,6){\line(-1,0){2}}
\multiput(20.25,6)(-.03333333,-.05){60}{\line(0,-1){.05}}
\put(18.25,3){\line(4,3){4}}
\multiput(22.25,6)(.03333333,-.05){60}{\line(0,-1){.05}}
\multiput(24.25,3)(-.1333333,-.0333333){30}{\line(-1,0){.1333333}}
\multiput(20.25,2)(-.03333333,.05){60}{\line(0,1){.05}}
\multiput(18.25,5)(.1333333,.0333333){30}{\line(1,0){.1333333}}
\put(22.25,6){\line(0,-1){4}}
\multiput(22.25,2)(-.1333333,.0333333){30}{\line(-1,0){.1333333}}
\put(18.25,3){\line(3,1){6}}
\put(24.25,5){\line(-4,-3){4}}
\put(20.25,2){\line(0,1){4}}
\put(20.25,6){\line(4,-3){4}}
\put(24.25,3){\line(-1,0){6}}
\multiput(22.25,6)(-.03333333,-.06666667){60}{\line(0,-1){.06666667}}
\multiput(20.25,6)(.03333333,-.06666667){60}{\line(0,-1){.06666667}}
\put(18.25,5){\line(3,-1){6}}
\put(18.25,5){\line(1,0){6}}
\multiput(24.25,5)(-.1333333,.0333333){30}{\line(-1,0){.1333333}}
\multiput(24.25,5)(-.03333333,-.05){60}{\line(0,-1){.05}}
\put(22.25,2){\line(-4,3){4}}
\put(22.25,6){\line(1,0){3}}
\multiput(25.25,6)(-.0333333,-.0333333){30}{\line(0,-1){.0333333}}
\multiput(25.25,6)(-.0333333,-.1){30}{\line(0,-1){.1}}
\put(27.25,6){\line(0,-1){4}}
\put(27.25,2){\line(1,0){3}}
\put(30.25,2){\line(0,1){4}}
\put(30.25,6){\line(-1,0){3}}
\put(27.25,6){\line(3,-4){3}}
\put(30.25,6){\line(-3,-4){3}}
\multiput(30.25,6)(.0333333,-.1333333){30}{\line(0,-1){.1333333}}
\multiput(30.25,6)(.03333333,-.06666667){60}{\line(0,-1){.06666667}}
\put(30.25,6){\line(3,-4){3}}
\put(.25,21){\circle*{.5}}
\put(6.25,21){\circle*{.5}}
\put(.25,18){\circle*{.5}}
\put(6.25,18){\circle*{.5}}
\put(9.25,21){\circle*{.5}}
\put(12.25,22){\circle*{.5}}
\put(15.25,21){\circle*{.5}}
\put(9.25,18){\circle*{.5}}
\put(15.25,18){\circle*{.5}}
\put(18.25,22){\circle*{.5}}
\put(20.25,22){\circle*{.5}}
\put(22.25,22){\circle*{.5}}
\put(24.25,22){\circle*{.5}}
\put(27.25,22){\circle*{.5}}
\put(29.25,22){\circle*{.5}}
\put(31.25,22){\circle*{.5}}
\put(33.25,22){\circle*{.5}}
\put(30.25,18){\circle*{.5}}
\put(21.25,18){\circle*{.5}}
\put(.25,14){\circle*{.5}}
\put(.25,10){\circle*{.5}}
\put(3.25,14){\circle*{.5}}
\put(6.25,14){\circle*{.5}}
\put(6.25,10){\circle*{.5}}
\put(3.25,10){\circle*{.5}}
\put(9.25,14){\circle*{.5}}
\put(11.25,14){\circle*{.5}}
\put(13.25,14){\circle*{.5}}
\put(15.25,14){\circle*{.5}}
\put(11.25,10){\circle*{.5}}
\put(18.25,14){\circle*{.5}}
\put(21.25,14){\circle*{.5}}
\put(24.25,14){\circle*{.5}}
\put(24.25,10){\circle*{.5}}
\put(18.25,10){\circle*{.5}}
\put(27.25,14){\circle*{.5}}
\put(29.25,13){\circle*{.5}}
\put(30.25,14){\circle*{.5}}
\put(31.25,14){\circle*{.5}}
\put(33.25,14){\circle*{.5}}
\put(29.25,10){\circle*{.5}}
\put(30.25,6){\circle*{.5}}
\put(27.25,6){\circle*{.5}}
\put(27.25,2){\circle*{.5}}
\put(30.25,2){\circle*{.5}}
\put(31.25,2){\circle*{.5}}
\put(32.25,2){\circle*{.5}}
\put(33.25,2){\circle*{.5}}
\put(25.25,6){\circle*{.5}}
\put(22.25,6){\circle*{.5}}
\put(20.25,6){\circle*{.5}}
\put(18.25,5){\circle*{.5}}
\put(18.25,3){\circle*{.5}}
\put(20.25,2){\circle*{.5}}
\put(22.25,2){\circle*{.5}}
\put(24.25,3){\circle*{.5}}
\put(24.25,5){\circle*{.5}}
\put(15.25,6){\circle*{.5}}
\put(15.25,3){\circle*{.5}}
\put(11.25,6){\circle*{.5}}
\put(9.25,5){\circle*{.5}}
\put(9.25,2){\circle*{.5}}
\put(13.25,2){\circle*{.5}}
\put(3.25,6){\circle*{.5}}
\put(6.25,6){\circle*{.5}}
\put(6.25,5){\circle*{.5}}
\put(6.25,3){\circle*{.5}}
\put(4.25,2){\circle*{.5}}
\put(2.25,2){\circle*{.5}}
\put(.25,3){\circle*{.5}}
\put(.25,5){\circle*{.5}}
\put(2.25,17){\makebox(0,0)[cc]{\scriptsize$P_4$}}
\put(2.25,16){\makebox(0,0)[cc]{\scriptsize$\partial_4\!=\!-3.14$}}
\put(12.25,17){\makebox(0,0)[cc]{\scriptsize$C_5$}}
\put(21.25,17){\makebox(0,0)[cc]{\scriptsize$H_0$}}
\put(12.25,16){\makebox(0,0)[cc]{\scriptsize$\partial_3\!=\!-0.38$}}
\put(21.25,16){\makebox(0,0)[cc]{\scriptsize$\partial_3\!=\!-0.91$}}
\put(30.25,17){\makebox(0,0)[cc]{\scriptsize$H_1$}}
\put(30.25,16){\makebox(0,0)[cc]{\scriptsize$\partial_3\!=\!-0.72$}}
\put(3.25,9){\makebox(0,0)[cc]{\scriptsize$H_2\!=\!P_3\vee P_3$}}
\put(3.25,8){\makebox(0,0)[cc]{\scriptsize$\partial_3\!=\!-0.7$}}
\put(11.25,8){\makebox(0,0)[cc]{\scriptsize$\partial_3\!=\!-0.77$}}
\put(21.25,9){\makebox(0,0)[cc]{\scriptsize$H_4\!=\!P_3\vee 2K_1$}}
\put(21.25,8){\makebox(0,0)[cc]{\scriptsize$\partial_3\!=\!-0.83$}}
\put(29.25,9){\makebox(0,0)[cc]{\scriptsize$H_5\!=\!K_1\vee(K_3\cup K_2)$}}
\put(11.25,9){\makebox(0,0)[cc]{\scriptsize$H_3\!=\!K_1\vee(P_3\cup K_1)$}}
\put(29.25,8){\makebox(0,0)[cc]{\scriptsize$\partial_6\!=\!-3.43$}}
\put(3.25,1){\makebox(0,0)[cc]{\scriptsize$I_1\!=\!K_1\vee(K_6\cup K_1)$}}
\put(3.25,0){\makebox(0,0)[cc]{\scriptsize$\partial_8\!=\!-3.07$}}
\put(12.25,1){\makebox(0,0)[cc]{\scriptsize$I_2\!=\!K_1\vee(K_4\cup2K_1)$}}
\put(12.25,0){\makebox(0,0)[cc]{\scriptsize$\partial_7\!=\!-3.21$}}
\put(21.25,1){\makebox(0,0)[cc]{\scriptsize$I_3\!=\!K_3\vee(K_5\cup K_1)$}}
\put(21.25,0){\makebox(0,0)[cc]{\scriptsize$\partial_9\!=\!-3.03$}}
\put(29.25,1){\makebox(0,0)[cc]{\scriptsize$I_4\!=\!K_1\vee(K_3\cup3K_1)$}}
\put(29.25,0){\makebox(0,0)[cc]{\scriptsize$\partial_7\!=\!-3.1$}}
\put(18.25,22.5){\makebox(0,0)[cc]{\scriptsize$v_1$}}
\put(20.25,22.5){\makebox(0,0)[cc]{\scriptsize$v_2$}}
\put(22.25,22.5){\makebox(0,0)[cc]{\scriptsize$v_3$}}
\put(24.25,22.5){\makebox(0,0)[cc]{\scriptsize$v_4$}}
\put(27.25,22.5){\makebox(0,0)[cc]{\scriptsize$v_1$}}
\put(29.25,22.5){\makebox(0,0)[cc]{\scriptsize$v_2$}}
\put(31.25,22.5){\makebox(0,0)[cc]{\scriptsize$v_3$}}
\put(33.25,22.5){\makebox(0,0)[cc]{\scriptsize$v_4$}}
\put(21.75,18){\makebox(0,0)[cc]{\scriptsize$v$}}
\put(30.75,18){\makebox(0,0)[cc]{\scriptsize$v$}}
\put(13.25,5){\circle*{.5}}
\end{picture}
\caption{Forbidden induced subgraphs}\label{fig-1}
\end{figure}
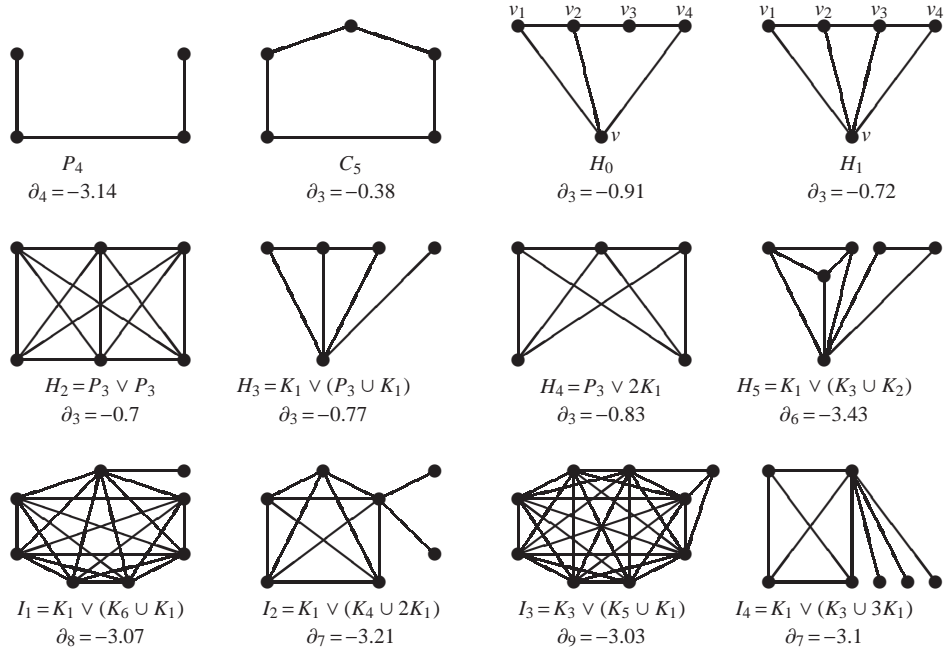

\begin{lem}\label{lem-3-11}
If $G\in\mathcal{G}[-3\le\partial_n,\partial_3\le-1]$, then there exists two proper subgraphs $G_1$ and $G_2$ at most one of them containing induced $P_3$ such that $G=G_1\vee G_2$. Furthermore, if $G_1$ contains induced $P_3$, then $G_1$ is connected and $G_2$ is complete.
\end{lem}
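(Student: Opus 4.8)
The plan is to combine Lemma~\ref{lem-3-9} with the forbidden-subgraph list furnished by Lemma~\ref{lem-3-10}. Since $G\in\mathcal{G}[-3\le\partial_n,\partial_3\le-1]$ is connected and non-complete and, by Lemma~\ref{lem-3-10}, contains no induced $P_4$, Lemma~\ref{lem-3-9} immediately yields a decomposition $G\cong G_1\vee G_2$ with $G_1,G_2$ non-null; in particular each is a proper (induced) subgraph of $G$. It then remains to control the internal structure of the two factors using the remaining forbidden subgraphs $H_2,H_3,H_4$.

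To show that at most one of $G_1,G_2$ contains an induced $P_3$, I would argue by contradiction. If $G_1$ had an induced $P_3$ on $\{a_1,a_2,a_3\}$ and $G_2$ an induced $P_3$ on $\{b_1,b_2,b_3\}$, then because every vertex of $G_1$ is adjacent to every vertex of $G_2$ in the join, the subgraph of $G$ induced on $\{a_1,a_2,a_3,b_1,b_2,b_3\}$ is exactly $P_3\vee P_3=H_2$. This contradicts Lemma~\ref{lem-3-10}, so at most one factor can contain an induced $P_3$.

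For the ``furthermore'' part, suppose $G_1$ contains an induced $P_3=a_1a_2a_3$; in particular $G_1$ is not complete. I would first prove that $G_1$ is connected: if not, pick a vertex $w$ lying in a component of $G_1$ different from the one containing the $P_3$, so that $w$ is non-adjacent in $G_1$ to each $a_i$. Taking any $v\in V(G_2)$, which is joined to all of $a_1,a_2,a_3,w$, the subgraph of $G$ induced on $\{v,a_1,a_2,a_3,w\}$ is $K_1\vee(P_3\cup K_1)=H_3$, contradicting Lemma~\ref{lem-3-10}. Next I would prove that $G_2$ is complete: if $G_2$ had two non-adjacent vertices $b_1,b_2$, then the subgraph induced on $\{a_1,a_2,a_3,b_1,b_2\}$ is $P_3\vee 2K_1=H_4$, again contradicting Lemma~\ref{lem-3-10}.

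The argument is essentially a careful bookkeeping of which configuration produces each forbidden subgraph, so there is no serious analytic obstacle; the only point requiring attention is to verify in each case that the displayed vertex set induces \emph{exactly} the claimed graph $H_2$, $H_3$, or $H_4$, which follows directly from the definition of the join together with the fact that $G_1,G_2$ are induced subgraphs of $G$. Finally, since the roles of $G_1$ and $G_2$ are interchangeable in the join, the same reasoning handles the symmetric situation in which $G_2$ is the factor containing an induced $P_3$.
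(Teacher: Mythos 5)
Your proposal is correct and follows essentially the same route as the paper: Lemma~\ref{lem-3-10} rules out $P_4$, Lemma~\ref{lem-3-9} gives the join decomposition, and the forbidden subgraphs $H_2$, $H_3$, $H_4$ respectively force that at most one factor contains an induced $P_3$, that the $P_3$-containing factor is connected, and that the other factor is complete. Your handling of the last step (any two non-adjacent vertices of $G_2$ together with the $P_3$ in $G_1$ induce $H_4$) is a slightly more direct phrasing of the paper's argument, but the substance is identical.
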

\begin{proof}
By Lemma \ref{lem-3-10},  $G$ contains no induced $P_4$ and so $G=G_1\vee G_2$ by Lemma \ref{lem-3-9}. By Lemma \ref{lem-3-10} again, $G$ contains no induced $H_2$ $(\cong P_3\vee P_3)$ and so at most one of $G_1$ and $G_2$ contains induced $P_3$. Furthermore, if $G_1$ contains induced $P_3$ but $G_2$ does not, then $G_1$ is connected since otherwise $G$ will contain induced $H_3$ $(\cong(P_3\cup K_1)\vee K_1)$, and $G_2$ is a union of complete graphs since it contains no induced $P_3$. In fact, $G_2\cong K_s$ for some $s\ge 1$, since otherwise $G$ will contain induced $H_4$ $(\cong P_3\vee 2K_1)$, which contradicts Lemma \ref{lem-3-10}.
\end{proof}

 Lemma \ref{lem-3-11} gives a sketch for the graph $G$ in $\mathcal{G}[-3\le\partial_n,\partial_3\le-1]$, that is, $G=G_1\vee G_2$. Now we  give a precise characterization of $G_1$ and $G_2$ in the following theorem.
\begin{thm}\label{thm-3-12}Let $G=G_1\vee G_2\in \mathcal{G}[-3\le\partial_n,\partial_3\le-1]$.\\
(i) If both $G_1$ and $G_2$ contain no induced $P_3$, then $G$ is one of the followings:

(a) $G\cong K_r\vee(K_s\cup tK_1)$ where $r,t\ge1$ and $s\ge3$;

(b) $G\cong K_r\vee(mK_2\cup tK_1)$ where $r\ge1$ and $m+t\ge2$;

(c) $G\cong (K_{s_1}\cup t_1K_1)\vee (K_{s_2}\cup t_2K_1)$ where $s_i\ge 3$ and $t_i\ge1$;

(d) $G\cong (K_s\cup t_1K_1)\vee(mK_2\cup t_2K_1)$ where $s\ge3,t_1\ge1$ and $m+t_2\ge2$;

(e) $G\cong (m_1K_2\cup t_1K_1)\vee(m_2K_2\cup t_2K_1)$ where $m_i+t_i\ge2$ for $i=1,2$.\\
(ii) If one of $G_1$ and $G_2$ contains induced $P_3$, then there exists $K_r$ such that $G=K_r\vee F_1\vee F_2$, where $r\ge1$, $F_1$ and $F_2$ are non-null containing no induced $P_3$.
\end{thm}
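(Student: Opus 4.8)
The plan is to build on the structural reduction already in hand. By Lemma~\ref{lem-3-11}, every $G\in\mathcal{G}[-3\le\partial_n,\partial_3\le-1]$ splits as $G=G_1\vee G_2$ with at most one factor containing an induced $P_3$, and if (say) $G_1$ does, then $G_1$ is connected and $G_2$ is complete. The two parts of the theorem correspond exactly to the two cases of this dichotomy. The single combinatorial fact I will use throughout is that a graph with no induced $P_3$ is a disjoint union of cliques, since each of its components is connected and $P_3$-free, hence complete.

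For part (i), both $G_1$ and $G_2$ are $P_3$-free, hence disjoint unions of cliques, and the key step is to classify each factor separately. Since $G_2$ is non-null it supplies a vertex that is universal to $G_1$, so if $G_1$ contained an induced $K_3\cup K_2$ then $G$ would contain $K_1\vee(K_3\cup K_2)=H_5$, contradicting Lemma~\ref{lem-3-10}. Thus if a factor has a clique component of size $\ge 3$, then all of its remaining components must have size $1$, giving the shape $K_s\cup tK_1$ with $s\ge 3$; otherwise every component has size $\le 2$, giving $mK_2\cup tK_1$. Writing each of $G_1,G_2$ in one of these shapes (with the degenerate single-clique shape $K_r$ absorbed appropriately) and discarding the combination $K_r\vee K_{r'}=K_{r+r'}$, which is complete and hence excluded, leaves exactly the five pairings, namely (a)--(e). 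The remaining forbidden subgraphs then cut down the admissible parameter ranges: avoiding $K_1\vee(K_6\cup K_1)=I_1$, $K_1\vee(K_4\cup 2K_1)=I_2$, $K_3\vee(K_5\cup K_1)=I_3$ and $K_1\vee(K_3\cup 3K_1)=I_4$ is what bounds the clique sizes and multiplicities within each form.

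For part (ii) I will argue by induction on the number of vertices of the factor carrying the induced $P_3$. With $G_1$ that factor, Lemma~\ref{lem-3-11} gives $G_1$ connected and $G_2=K_s$; since $G_1$ is connected, $P_4$-free and non-complete, Lemma~\ref{lem-3-9} yields $G_1=A\vee B$ with $A,B$ non-null. Because $P_3\vee P_3=H_2$ is forbidden, at most one of $A,B$ contains an induced $P_3$. If neither does, I set $F_1=A$, $F_2=B$ and $K_r=G_2$, and we are done. If exactly one, say $A$, contains $P_3$, then a vertex universal to $A$ (taken from $B$ or from $G_2$) forces $B$ to be complete---otherwise $P_3\vee 2K_1=H_4$ appears---and forces $A$ to be connected---otherwise $K_1\vee(P_3\cup K_1)=H_3$ appears. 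Hence $G=A\vee(K_{s'}\vee K_s)=A\vee K_{s+s'}$, where $A$ is connected, $P_4$-free, contains $P_3$, and has strictly fewer vertices than $G_1$; the induction hypothesis, applied to $G$ re-expressed as $A\vee K_{s+s'}$, produces the desired $K_r\vee F_1\vee F_2$. The recursion terminates because the size of the $P_3$-carrying factor strictly decreases and the minimal such factor $A\cong P_3=K_1\vee 2K_1$ already falls into the ``neither contains $P_3$'' situation.

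The clique-size bookkeeping in part (i) is routine; the real obstacle is making the induction in part (ii) airtight. At every level of the recursion, the arguments that force one sub-factor to be complete and the other to be connected rely on exhibiting $H_2,H_3,H_4$ as genuine induced subgraphs of $G$, which in turn requires a vertex that is universal to the $P_3$-carrying sub-factor. I must verify that such a vertex is always available---which it is, because each sub-factor is a join factor and the complementary factors, including the accumulated complete part $K_{s+s'}$, are joined to all of it---and that the forbidden-subgraph conclusions of Lemma~\ref{lem-3-10}, stated for $G$, transfer to the relevant induced subgraphs at each step. Getting this ``universal vertex is present at every recursion level'' point exactly right, rather than only at the top level where Lemma~\ref{lem-3-11} applies directly, is where the main care is needed.
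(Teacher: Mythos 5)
Your proof is correct, and part (i) follows the paper's argument almost verbatim: classify each $P_3$-free factor as $K_r$, or $K_s\cup tK_1$ ($s\ge3$, forced by forbidding $H_5=K_1\vee(K_3\cup K_2)$), or $mK_2\cup tK_1$, and enumerate the pairings; the only superfluous element is your invocation of $I_1$--$I_4$, which is not needed for the parameter ranges stated in the theorem (those finer bounds only enter later, in Lemma~\ref{lem-3-15}). Part (ii) is where you genuinely diverge. The paper takes $K_r$ to be a \emph{maximum} clique with $G=K_r\vee F_1\vee F_2$ and derives a contradiction if some $F_i$ still contains $P_3$; crucially, at each stage it re-establishes that the inner join $F=F_1\vee F_2$ itself lies in $\mathcal{G}[-3\le\partial_n,\partial_3\le-1]$ via distance-eigenvalue interlacing (Corollary~\ref{cor-3-3}, using $d(F)<3$), so that Lemma~\ref{lem-3-11} can be re-applied to $F$ as a black box. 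You instead run an induction on the size of the $P_3$-carrying factor and re-derive the conclusions of Lemma~\ref{lem-3-11} at every level purely combinatorially: the join structure guarantees a vertex universal to the current $P_3$-carrier, so non-completeness of the companion factor would exhibit $H_4=P_3\vee 2K_1$ and disconnectedness of the carrier would exhibit $H_3=K_1\vee(P_3\cup K_1)$ as induced subgraphs of $G$ itself, both excluded by Lemma~\ref{lem-3-10}. Your closing concern about the universal vertex being available at every recursion level is exactly the right point to check, and it does hold since the peeled-off complete parts and the sibling factors are all joined to the carrier. The trade-off: the paper's route reuses the spectral machinery (interlacing) inside the recursion, while yours keeps the recursion entirely at the level of forbidden induced subgraphs of the fixed graph $G$, at the cost of having to reverify the structural dichotomy by hand at each step. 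Both are sound.
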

\begin{proof}
By Lemma \ref{lem-3-11}, there exists two proper subgraphs $G_1$ and $G_2$ at most one of them containing induced $P_3$ such that $G=G_1\vee G_2$. Now we divide our proof into two cases.

\textbf{Case 1.} Both $G_1$ and $G_2$ contain no induced $P_3$.

In this case, both $G_1$ and $G_2$ are unions of some complete graphs. Then at most one of $G_1$ and $G_2$ is connected since otherwise $G$ will be complete. We consider two subcases bellow.

\textbf{Subcase 2.1.} One of $G_1$ and $G_2$ is connected;

Without loss of generality, we assume that $G_1$ is connected but $G_2$ is disconnected. Then $G_1\cong K_r$ for some $r\ge 1$.

If $G_2$ contains $K_3$, then  $G_2\cong K_s\cup tK_1$, where $s\ge 3,t\ge1$. Since otherwise, $G_2$ will contain induced $K_3\cup K_2$, and then $G=K_r\vee G_2$ will contain induced $H_5$ $(\cong K_1\vee(K_3\cup K_2))$,  which contradicts Lemma \ref{lem-3-10}. Thus (a) follows.

If $G_2$ contains no $K_3$, then $G_2\cong mK_2\cup tK_1$, where $m+t\ge2$. It follows (b).

\textbf{Subcase 2.2.} Both of $G_1$ and $G_2$ are disconnected.

If both of $G_1$ and $G_2$ contain $K_3$, then $G_1\cong K_{s_1}\cup t_1K_1$ and $G_2\cong K_{s_2}\cup t_2K_1$, where $s_i\ge3$ and $t_i\ge 1$ for $i=1,2$. Since otherwise, $G_1$ or $G_2$ will contain induced $K_3\cup K_2$, and then $G$ will contain induced $H_5$, which contradicts Lemma \ref{lem-3-10}. It follows (c).

If just one of $G_1$ and $G_2$ contains $K_3$, say $G_1$, then $G_2\cong mK_2\cup t_2K_1$, where  $m+t_2\ge2$. We claim that  $G_1\cong K_s\cup t_1K_1$ for $s\ge3,t\ge1$. Since otherwise, $G_1$  will contain induced $K_3\cup K_2$, and thus $G$ will contain induced $H_5$, which contradicts Lemma \ref{lem-3-10}. It follows (d).

If both  $G_1$ and $G_2$  contain no $K_3$, then $G_1\cong m_1K_2\cup t_1K_1$ and $G_2\cong m_2K_2\cup t_2K_1$, where $m_i+t_i\ge2$ for $i=1,2$. It follows  (e).

 \textbf{Case 2.} Exactly one of $G_1$ and $G_2$ contains induced $P_3$.

 Without loss of generality, suppose $G_1$ contains induced $P_3$ but $G_2$ does not. By Lemma \ref{lem-3-11}, $G_1$ is connected and $G_2\cong K_s$ for some $s\ge 1$. By Lemma \ref{lem-3-10},  $G$ contains no induced $P_4$, so $G_1$ contains no induced $P_4$. By Lemma \ref{lem-3-9}, $G_1$ is the join of two non-null graphs, and so the diameter of $G_1$ is less than $3$. Now we obtain that $G=K_s\vee G_1$, where $d(G_1)<3$. Thus the eigenvalues of $D(G_1)$ interlace that of $D(G)$ by Corollary \ref{cor-3-3}, and so $G_1\in \mathcal{G}[-3\le\partial_{|G_1|},\partial_3\le-1]$. Again by Lemma \ref{lem-3-11}, we have $G_1=G_1'\vee G_2'$, in which at most one of $G_1'$ and $G_2'$ contains induced $P_3$. Thus $G=K_s\vee G_1'\vee G_2'$, where $s\ge1$.

Now, we may assume that $K_r$ $(r\ge1)$ is the maximum clique such that  $G=K_r\vee F_1\vee F_2$, where $F_1$ and $F_2$ are  non-null and at most one of  them contains induced $P_3$. Finally, we show that $F_1$ and $F_2$ contain no induced $P_3$. By the way of contradiction, suppose $F_1$ contains  induced $P_3$ but  $F_2$ does not. Let $F=F_1\vee F_2$. Since the diameter of $F$ is less than $3$, we have $\partial_3(F)\le\partial_3(G)\le-1$ and $\partial_{|F|}(F)\ge\partial_n(G)\ge-3$ by Corollary \ref{cor-3-3}. By Lemma \ref{lem-3-11}, $F_1$ is connected and $F_2$ is complete, say $F_2=K_{r'}$ $(r'\ge1)$.  By Lemma \ref{lem-3-10}, $G$ contains no induced $P_4$, nor does $F_1$. Thus there exists non-null graphs $F_1',F_2'$ such that $F_1=F_1'\vee F_2'$ by Lemma \ref{lem-3-9}, and so $G\cong K_{r+r'}\vee$ $(F_1'\vee F_2')$, which is a contradiction since $r+r'>r$.
\end{proof}
Theorem \ref{thm-3-12} tells us that those graphs belonging to $\mathcal{G}[-3\le\partial_n,\partial_3\le-1]$ are included in the set of graphs described in Theorem \ref{thm-3-12}. Conversely, the graphs described in Theorem \ref{thm-3-12} may not be in $\mathcal{G}[-3\le\partial_n,\partial_3\le-1]$. In fact, the graph $G=K_1\vee(K_6\cup K_1)$ has the form $K_r\vee(K_s\cup K_1)$ characterized in Theorem \ref{thm-3-12}(i)(a), however $G\not\in\mathcal{G}[-3\le\partial_n,\partial_3\le-1]$ since $\mathrm{Spec}_D(G)=[8.78,-0.70,(-1)^5,-3.07]$. Naturally, we try to give a complete characterization of the graphs in $\mathcal{G}[-3\le\partial_n,\partial_3\le-1]$. We first classify those graphs described in Theorem \ref{thm-3-12} into three types.

By simple observation of Theorem \ref{thm-3-12}, all  graphs characterized in Theorem \ref{thm-3-12}(i) can be written as $K_0\vee F_1\vee F_2$, where $F_1$ and $F_2$ are non-null and contain no induced $P_3$. Therefore, we get that $G\cong K_r\vee  F_1\vee F_2$ if $G\in \mathcal{G}[-3\le\partial_n,\partial_3\le-1]$, where $r\ge0$, $F_1$ and $F_2$ are non-null and contain no induced $P_3$.
A graph $F$ containing no  induced $P_3$ will be a complete graph if it is connected, and will be a union of some complete graphs otherwise. Therefore, we get the following result.
\begin{cor}\label{cor-3-13}
If $G\in \mathcal{G}[-3\le\partial_n,\partial_3\le-1]$, then one of the following cases holds:\\
(I) $G\cong K_r\vee (K_{s_1}\cup\cdots\cup K_{s_i})$, where $r\ge1$ and $i\ge 2$; \\
(II) $G\cong (K_{s_1}\cup\cdots\cup K_{s_i})\vee  (K_{t_1}\cup\cdots\cup K_{t_j})$, where $i\ge 2$ and $j\ge 2$.
\end{cor}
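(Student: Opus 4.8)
The plan is to take as given the normal form derived in the paragraph preceding the statement: any $G\in\mathcal{G}[-3\le\partial_n,\partial_3\le-1]$ can be written as $G=K_r\vee F_1\vee F_2$ with $r\ge0$ and $F_1,F_2$ non-null and containing no induced $P_3$, and then to translate ``$P_3$-free'' into ``disjoint union of complete graphs''. Writing $F_1=K_{a_1}\cup\cdots\cup K_{a_p}$ and $F_2=K_{b_1}\cup\cdots\cup K_{b_q}$ with $p,q\ge1$, the whole statement becomes a short case analysis on the component counts $p,q$ together with $r$, since the two target shapes (I) and (II) are separated precisely by how many of the two factors are disconnected.

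First I would clear the degenerate case $p=q=1$: there $G=K_{r+a_1+b_1}$ is complete, contradicting non-completeness, so $p\ge2$ or $q\ge2$. If exactly one factor is a single clique, say $p=1$ with $F_1=K_{a_1}$, then it merges with the central clique to give $G=K_{r+a_1}\vee F_2$; as $F_2$ has $q\ge2$ components, this is shape (I) with $R=r+a_1\ge1$ and $i=q$. The case $q=1$ is symmetric. If $p\ge2$, $q\ge2$ and $r=0$, then $G=F_1\vee F_2$ is a join of two disconnected unions of cliques, which is shape (II).

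The remaining possibility, $p\ge2$, $q\ge2$ and $r\ge1$, is the one I expect to be the real obstacle, since such a graph fits neither target shape: it has $r\ge1$ universal vertices whereas the graphs in (II) have none, and deleting its universal vertices leaves $F_1\vee F_2$, which is connected but not complete and hence not the disjoint union of cliques that (I) would demand. To discard it I would exhibit a forbidden induced subgraph: a vertex $z$ of $K_r$ together with one vertex taken from each of two distinct cliques of $F_1$ induces a $P_3$ (the two chosen vertices are non-adjacent but both joined to $z$), while one vertex from each of two distinct cliques of $F_2$ induces a $2K_1$ completely joined to this $P_3$; the five vertices induce $H_4\cong P_3\vee 2K_1$, which is forbidden by Lemma \ref{lem-3-10}. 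Hence $r=0$ is forced here, returning us to shape (II), and the three surviving cases exhaust (I) and (II).

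The one genuinely non-routine point is thus the third paragraph: the slogan ``each $F_i$ is a union of cliques, so we are in (I) or (II)'' quietly disregards the central clique $K_r$, and it is exactly the $H_4$-freeness coming from membership in $\mathcal{G}$ that collapses the spurious family $K_r\vee F_1\vee F_2$ with $r\ge1$ and both $F_i$ disconnected onto the two listed shapes.
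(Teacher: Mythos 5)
Your proposal is correct and follows essentially the same route as the paper: start from the normal form $G=K_r\vee F_1\vee F_2$ with $P_3$-free factors, split on how many factors are disconnected, and kill the case $r\ge1$ with both factors disconnected via the forbidden induced subgraph $H_4\cong P_3\vee 2K_1=K_1\vee 2K_1\vee 2K_1$. The only difference is that you spell out the explicit five-vertex construction of $H_4$, which the paper leaves implicit.
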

\begin{proof}
First we know that $G=K_r\vee F_1\vee F_2$, where $F_i$ $(i=1,2)$ is the union of some complete graphs and $r\ge0$. Since $G$ is not complete, at most one of $F_1$ and $F_2$ is connected. If exactly one of $F_1$ and $F_2$ is connected,  then $G\cong K_r\vee (K_{s_1}\cup\cdots\cup K_{s_i})$, where $r\ge1$ and $i\ge 2$. Thus (I) holds. If both of $F_1$ and $F_2$ are disconnected, then $G\cong K_r\vee (K_{s_1}\cup\cdots\cup K_{s_i})\vee (K_{t_1}\cup\cdots\cup K_{t_j})$,  where $r\ge0$, $i\ge 2$ and $j\ge 2$. By Lemma \ref{lem-3-10}, $H_4$ $(=K_1\vee2K_1\vee2K_1)$ cannot be an induced subgraph of $G$, we claim that $r=0$ and so (II) holds.
\end{proof}

We say that $G\in\mathcal{G}[-3\le\partial_n,\partial_3\le-1]$ is of type-I and type-II if $G$ satisfies (I) and (II) in Corollary \ref{cor-3-13}, respectively.

Next we give a complete characterization of the graphs in $\mathcal{G}[-3\le\partial_n,\partial_3\le-1]$ by using the forbidden subgraphs $H_4,H_5,I_1,I_2,I_3,I_4$. Denote by $S(m,n)=(mK_2\cup nK_1)\vee(K_5\cup K_1)$ ($m+n\ge1$), $T_1=K_4\cup K_1$, $T_2=K_3\cup 2K_1$, $T_3=K_3\cup K_1$ and $T_4(m,n)=mK_2\cup nK_1$ ($m+n\ge2$). Moreover, denote by $\mathcal{S}=\{S(m,n)\mid m+n\ge1\}$, $\mathcal{T}_1=\{K_r\vee T_i\mid r\ge1, 1\le i\le3\}\cup\{K_r\vee T_4(m,n)\}$ and $\mathcal{T}_2=\{T_i\vee T_j\mid 1\le i,j\le4\}\cup\{T_i\vee T_4(m,n), T_4(m_1,n_1)\vee T_4(m_2,n_2)\mid 1\le i\le 3\}$. It is the stage to state the one of our main  result ( Theorem \ref{thm-3-14} ) whose proof will be given latter after Lemma \ref{lem-3-15} and Lemma \ref{lem-3-16}.
\begin{thm}\label{thm-3-14}
$\mathcal{G}[-3\le\partial_n,\partial_3\le-1]=\mathcal{S}\cup\mathcal{T}_1\cup\mathcal{T}_2.$
\end{thm}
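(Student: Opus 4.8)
The plan is to prove the two inclusions $\mathcal{G}[-3\le\partial_n,\partial_3\le-1]\subseteq\mathcal{S}\cup\mathcal{T}_1\cup\mathcal{T}_2$ and $\mathcal{S}\cup\mathcal{T}_1\cup\mathcal{T}_2\subseteq\mathcal{G}[-3\le\partial_n,\partial_3\le-1]$ separately. The first (necessity) is purely structural and rests on Corollary \ref{cor-3-13} together with the forbidden-subgraph list of Lemma \ref{lem-3-10}; the second (sufficiency) is spectral and rests on the distance equitable partition machinery of Section \ref{s-2}.

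For the inclusion ``$\subseteq$'', I start from Corollary \ref{cor-3-13}, so that every $G$ in the class is of type-I, $G\cong K_r\vee(K_{s_1}\cup\cdots\cup K_{s_i})$ with $r\ge1$, $i\ge2$, or of type-II, $G\cong(K_{s_1}\cup\cdots\cup K_{s_i})\vee(K_{t_1}\cup\cdots\cup K_{t_j})$ with $i,j\ge2$. The first step is to constrain each ``union of cliques'' factor: since $H_5\cong K_1\vee(K_3\cup K_2)$ is forbidden by Lemma \ref{lem-3-10} and any single vertex of the opposite factor dominates such a pair of cliques, a factor cannot simultaneously contain a clique of size $\ge3$ and another clique of size $\ge2$. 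Hence each factor is either $mK_2\cup nK_1$ (all cliques of size $\le2$) or $K_s\cup tK_1$ with $s\ge3$ (one large clique, the rest isolated). The second step bounds the parameters through the remaining forbidden subgraphs: $I_1\cong K_1\vee(K_6\cup K_1)$ forces $s\le5$; $I_2\cong K_1\vee(K_4\cup2K_1)$ forces $t\le1$ once $s\ge4$; $I_4\cong K_1\vee(K_3\cup3K_1)$ forces $t\le2$ once $s=3$; and $I_3\cong K_3\vee(K_5\cup K_1)$ couples the two factors—if one factor equals $K_5\cup K_1$ then the opposite factor can contain no triangle, i.e.\ it is of the form $mK_2\cup nK_1$. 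Collecting the surviving configurations yields exactly $T_1,T_2,T_3,T_4(m,n)$ as admissible factors, hence the graphs $K_r\vee T_i$ ($\mathcal{T}_1$), $T_i\vee T_j$ ($\mathcal{T}_2$), and the $K_5\cup K_1$ graphs $S(m,n)$ ($\mathcal{S}$); in particular the type-I specializations $K_1\vee(K_5\cup K_1)=S(0,1)$ and $K_2\vee(K_5\cup K_1)=S(1,0)$ account for the $m+n=1$ members of $\mathcal{S}$.

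For the inclusion ``$\supseteq$'', I would verify that each member of $\mathcal{S}\cup\mathcal{T}_1\cup\mathcal{T}_2$ really satisfies $-3\le\partial_n$ and $\partial_3\le-1$ by exhibiting its full distance spectrum. Every such graph is a join of unions of complete graphs, and the partition grouping together the $K_r$-part, the vertices lying in the $K_2$'s, the isolated vertices, and each large clique is the orbit partition, hence a distance equitable partition by Lemma \ref{lem-2-1}. By Lemma \ref{lem-3-4} and Corollaries \ref{cor-3-6} and \ref{cor-3-7}, the ``bulk'' of the spectrum consists of $-1$ (from cliques with equal external neighbourhoods), $-2$ (from the star independent sets $nK_1$) and $-3$ (from the $mK_2$ parts), all lying in $[-3,-1]$. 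The corresponding eigenvectors sum to zero on each class, so they are orthogonal to the column space of the characteristic matrix; by Theorem \ref{thm-2-3} and a dimension count the remaining eigenvalues are then exactly those of the small distance divisor matrix $B^*_\Pi$, whose characteristic polynomial I would compute explicitly as a function of $r,s,m,n$. It remains to check that $B^*_\Pi$ contributes the Perron root $\partial_1$, at most one further eigenvalue exceeding $-1$, and no eigenvalue below $-3$; combining this with the bulk gives $\partial_3\le-1$ and $\partial_n\ge-3$ simultaneously.

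The main obstacle will be this sufficiency direction. Whereas necessity only has to exclude finitely many small configurations, sufficiency must be established uniformly over the unbounded parameters $r,m,n$: one has to show that, as these parameters grow, the non-bulk eigenvalues coming from $B^*_\Pi$ never let a third eigenvalue climb above $-1$ nor the least eigenvalue fall below $-3$. The delicate points are precisely the boundary families where a divisor-matrix root approaches $-1$ or $-3$—the family $S(m,n)$ with its $K_5\cup K_1$ factor being the tightest, matching why already $K_6\cup K_1$ is excluded by $I_1$—so the estimates on the roots of the parametrized characteristic polynomials must be carried out by explicit sign analysis rather than by a soft continuity argument.
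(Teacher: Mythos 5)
Your proposal is correct and follows essentially the same route as the paper: the necessity direction reproduces the argument of Lemma \ref{lem-3-15} (Corollary \ref{cor-3-13} plus the forbidden subgraphs $H_5,I_1,I_2,I_3,I_4$ to pin down the admissible clique factors), and the sufficiency direction is the strategy of Lemma \ref{lem-3-16} (distance equitable partitions and Theorem \ref{thm-2-3} for the non-bulk eigenvalues, Lemma \ref{lem-3-4} and Corollaries \ref{cor-3-6}--\ref{cor-3-7} for the $-1,-2,-3$ bulk, and explicit sign analysis of the parametrized divisor polynomials at $-1$ and $-3$). You also correctly identify where the real work lies, namely the uniform-in-parameters root location for the divisor matrices, which the paper carries out case by case (including the quartic cases via the derivative).
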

For the convenience, we first partition the graphs in $\mathcal{S}\cup\mathcal{T}_1\cup\mathcal{T}_2$ into $32$ subclasses  in terms of parameters $m$ and $n$ in the following table:\\
\renewcommand{\arraystretch}{0.95}
\begin{table}[H]
\vspace{-0.4cm}
\floatsetup{floatrowsep=qquad,captionskip=5pt} \tabcolsep=9pt
\begin{center}{\scriptsize
\begin{floatrow}
\ttabbox{\caption{\label{tab-1}\scriptsize{Subclasses of $\mathcal{S}\cup\mathcal{T}_1\cup\mathcal{T}_2$}}}{
\begin{tabular}{lll}
  \toprule
     &type-I &type-II \\
  \midrule
  $\mathcal{S}$&$S(1,0)$, $S(0,1)$&$S(m,0)$ ($m\ge2$), $S(0,n)$ ($n\ge2$), $S(m,n)$ ($m,n\ge1$)\\
  $\mathcal{T}_1$&\tabincell{l}{$K_r\vee T_1$, $K_r\vee T_2$, $K_r\vee T_3$,\\ $K_r\vee T_4(m,0)$, $K_r\vee T_4(0,n)$, $K_r\vee T_4(m,n)$}& \\
  $\mathcal{T}_2$& &\tabincell{l}{$T_1\vee T_1$,$T_1\vee T_2$, $T_1\vee T_3$,$T_1\vee T_4(m,n)$,$T_1\vee T_4(m,0)$,$T_1\vee T_4(0,n)$,\\ $T_2\vee T_2$,$T_2\vee T_3$,$T_2\vee T_4(m,n)$,$T_2\vee T_4(m,0)$,$T_2\vee T_4(0,n)$,$T_3\vee T_3$,\\$T_3\vee T_4(m,n)$,$T_3\vee T_4(m,0)$,$T_3\vee T_4(0,n)$,$T_4(m_1,n_1)\vee T_4(m_2,n_2)$,\\$T_4(m_1,n_1)\vee T_4(m_2,0)$,$T_4(m_1,n_1)\vee T_4(0,n_2)$,$T_4(m_1,0)\vee T_4(m_2,0)$,\\$T_4(m_1,0)\vee T_4(0,n_2)$,$T_4(0,n_1)\vee T_4(0,n_2)$}\\
  \bottomrule
 \end{tabular}}
\end{floatrow}}
\end{center}
\vspace{-0.4cm}
\end{table}

We calculate all the distance spectra of the graphs in Table \ref{tab-1}, which are list in Appendix A. In fact, we concretely calculate some the distance spectra of them in details in Lemma \ref{lem-3-16} and the others will be obtained by the same method.

\begin{lem}\label{lem-3-15}
$\mathcal{G}[-3\le\partial_n,\partial_3\le-1]\subseteq\mathcal{S}\cup\mathcal{T}_1\cup\mathcal{T}_2$.
\end{lem}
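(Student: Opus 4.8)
The plan is to start from the structural dichotomy already established in Corollary \ref{cor-3-13}: every $G\in\mathcal{G}[-3\le\partial_n,\partial_3\le-1]$ is either of type-I, $G\cong K_r\vee(K_{s_1}\cup\cdots\cup K_{s_i})$ with $r\ge1$ and $i\ge2$, or of type-II, $G\cong F_1\vee F_2$ where $F_1=K_{s_1}\cup\cdots\cup K_{s_i}$ and $F_2=K_{t_1}\cup\cdots\cup K_{t_j}$ are both disconnected ($i,j\ge2$). In both cases the goal is to pin down the admissible component sizes by exploiting the forbidden induced subgraphs $H_5,I_1,I_2,I_3,I_4$ supplied by Lemma \ref{lem-3-10}, and then match the surviving graphs against the lists defining $\mathcal{S}$, $\mathcal{T}_1$ and $\mathcal{T}_2$.

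For the type-I case I would argue on the largest component $K_s$ of the union. Since a vertex of $K_r$ dominates the whole union, any two components together with such a vertex span an induced $K_1\vee(\cdots)$, so the forbidden subgraphs yield: by $H_5=K_1\vee(K_3\cup K_2)$, if $s\ge3$ then every other component is a single vertex; by $I_1=K_1\vee(K_6\cup K_1)$, $s\le5$; by $I_2=K_1\vee(K_4\cup 2K_1)$, if $s\ge4$ then there is exactly one further (singleton) component; and by $I_4=K_1\vee(K_3\cup 3K_1)$, if $s=3$ there are at most two singleton components. This forces the union to be one of $K_3\cup K_1$, $K_3\cup 2K_1$, $K_4\cup K_1$, $K_5\cup K_1$, or (when $s\le2$) some $mK_2\cup tK_1$. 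The only extra ingredient is $I_3=K_3\vee(K_5\cup K_1)$: when $s=5$ it forces $r\le2$, so $K_r\vee(K_5\cup K_1)$ only survives as $S(0,1)$ and $S(1,0)$ and lands in $\mathcal{S}$, while every remaining possibility lands in $\mathcal{T}_1$.

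For the type-II case the key observation is that a single vertex of $F_2$ dominates $F_1$, so $K_1\vee F_1$ (and symmetrically $K_1\vee F_2$) is an induced subgraph of $G$; hence the very same $H_5,I_1,I_2,I_4$ analysis applies to each factor separately, showing that each of $F_1,F_2$ is one of $K_5\cup K_1$, $T_1=K_4\cup K_1$, $T_2=K_3\cup 2K_1$, $T_3=K_3\cup K_1$, or $T_4(m,n)=mK_2\cup nK_1$. It then remains to rule out the incompatible pairings, and here $I_3$ does the work once more: if one factor equals $K_5\cup K_1$, then the other factor cannot contain a component of size $\ge3$ (else $K_3\vee(K_5\cup K_1)=I_3$ appears), so the other factor must be some $T_4(m,n)$, giving a member of $\mathcal{S}$; and if neither factor is $K_5\cup K_1$, both lie in $\{T_1,T_2,T_3,T_4(m,n)\}$, giving a member of $\mathcal{T}_2$.

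I expect the main obstacle to be bookkeeping rather than anything conceptual: one must check each forbidden configuration is extracted as a genuine induced subgraph — in particular that two mutually non-adjacent witness vertices really come from distinct components, and that the ``dominating'' clique in $I_2$, $I_3$, $I_4$ is drawn consistently either from $K_r$ or from the opposite join factor — and one must verify that the per-factor classification together with the single cross-constraint $I_3$ reproduces the three families exactly, without gaps. The reverse inclusion, namely that every listed graph actually satisfies $-3\le\partial_n$ and $\partial_3\le-1$, is deferred to Lemma \ref{lem-3-16} and the spectral computations in the appendix, and is not needed for the present statement.
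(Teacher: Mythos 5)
Your proposal is correct and follows essentially the same route as the paper: invoke Corollary \ref{cor-3-13} to reduce to type-I/type-II, then classify the component sizes of the complete-union factors via the forbidden induced subgraphs $H_5, I_1, I_2, I_3, I_4$ from Lemma \ref{lem-3-10}, with $I_3$ supplying the only cross-factor (or $r\le 2$) restriction in the $K_5\cup K_1$ case. The paper organizes the case analysis by the value of the largest component $s_1\in\{5,4,3,\le 2\}$ rather than by extracting one constraint per forbidden subgraph, but the substance is identical.
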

\begin{proof}
Let $G\in \mathcal{G}[-3\le\partial_n,\partial_3\le-1]$. Then $G$ is of type-I or type-II by Corollary \ref{cor-3-13}.

Suppose that $G$ is of type-I, i.e., $G\cong K_r\vee (K_{s_1}\cup\cdots\cup K_{s_i})$, where $r\ge1$ and $i\ge 2$. Without loss of generality, we assume that $s_1\ge\cdots\ge s_i$. Since $I_1$ $(=K_1\vee(K_6\cup K_1))$ cannot be an induced subgraph of $G$, $s_1\le5$. If $s_1=5$, then $s_k\le 1$ for $k=2,...,i$ because $G$ contains no induced $H_5$ $(=K_1\vee(K_3\cup K_2))$. Moreover, since $I_2$ $(=K_1\vee(K_4\cup 2K_1))$ cannot be an induced subgraph of $G$, we have $s_3=0$, which implies that $G=K_r\vee(K_5\cup K_1)$. Note that $I_3$ $(=K_3\vee(K_5\cup K_1))$ cannot be an induced subgraph of $G$, we obtain that $r=1$ or $r=2$, that is, $G=K_1\vee(K_5\cup K_1)=S(0,1)$ or $G=K_2\vee(K_5\cup K_1)=S(1,0)$. If $s_1=4$, then $s_k\le 1$ for $k=2,...,i$ because $G$ contains no induced $H_5$. Moreover, since $I_2$ cannot be an induced subgraph of $G$, we have $s_3=0$, which implies that $G=K_r\vee(K_4\cup K_1)=K_r\vee T_1$. Similarly, if $s_1=3$, then $G=K_r\vee (K_3\cup 2K_1)=K_r\vee T_2$ or $G=K_r\vee (K_3\cup K_1)=K_r\vee T_3$ because $H_5$ and $I_4$ ($=K_1\vee(K_3\cup3K_1)$) cannot be induced subgraphs of $G$. If $s_1\le2$ then $G=K_r\vee (mK_2\cup nK_1)=K_r\vee T_4(m,n)$.

Suppose that $G$ is of type-II, i.e., $G\cong (K_{s_1}\cup\cdots\cup K_{s_i})\vee  (K_{t_1}\cup\cdots\cup K_{t_j})$, where $i\ge 2$ and $j\ge 2$. Without loss of generality, we assume that $s_1\ge\cdots\ge s_i$,  $t_1\ge\cdots\ge t_j$ and $s_1\ge t_1$.  Since $I_1$ $(=K_1\vee(K_6\cup K_1))$ cannot be an induced subgraph of $G$, $s_1\le5$. If $s_1=5$, then $s_2=1$ and $s_3=0$ because $H_5$ and $I_2$ cannot be induced subgraphs of $G$. Moreover, we have $t_1\le 2$ because $I_3$ cannot be an induced subgraph of $G$. Thus $G=(K_5\cup K_1)\vee(mK_2\cup nK_1)=S(m,n)$ ($m+n\ge2$). Similarly, If $s_1=4$, then $G=T_1\vee T_i$ for $1\le i\le 3$ or $G=T_1\vee T_4(m,n)$ because $H_5$, $I_2$ and $I_4$ cannot be induced subgraphs of $G$; if $s_1=3$, then $G=T_i\vee T_j$ or $G=T_i\vee T_4(m,n)$ for for $2\le i,j\le 3$ because $H_5$ and $I_4$ cannot be induced subgraphs of $G$; if $s_1\le2$, then $G=T_4(m_1,n_1)\vee T_4(m_2,n_2)$.

We complete the proof.
\end{proof}

\begin{lem}\label{lem-3-16}
$\mathcal{S}\cup\mathcal{T}_1\cup\mathcal{T}_2\subseteq \mathcal{G}[-3\le\partial_n,\partial_3\le-1]$.
\end{lem}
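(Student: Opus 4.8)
The plan is to verify, for every graph $G$ listed in Table~\ref{tab-1}, the two inequalities $\partial_3(G)\le -1$ and $\partial_n(G)\ge -3$ by computing its distance spectrum through a distance equitable partition. Every such graph is a join of unions of cliques, so the copies of $K_2$, the isolated vertices, and each clique each form a single orbit; consequently the orbit partition $\Pi$ is a distance equitable partition by Lemma~\ref{lem-2-1}. First I would write down the (small, at most $4\times 4$ or $5\times 5$) distance divisor matrix $B_\Pi^*$. By Theorem~\ref{thm-2-3} its eigenvalues are distance eigenvalues of $G$, and by Corollary~\ref{cor-2-4} the largest of them equals $\partial_1$.

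Next I would account for the remaining $n-k$ distance eigenvalues using the structural results of Section~\ref{s-3}: each clique $K_p$ occurring as a block (with uniform external distances) contributes $-1$ with multiplicity $p-1$ by Lemma~\ref{lem-3-4}; each induced $mK_2$ contributes $-3$ with multiplicity $m-1$ by Corollary~\ref{cor-3-7}; and each set $nK_1$, being a star independent set, contributes $-2$ with multiplicity $n-1$ by Corollary~\ref{cor-3-6}. A dimension count shows that these, together with the $k$ eigenvalues of $B_\Pi^*$, exhaust all $n$ eigenvalues. The crucial observation is that every structural eigenvalue equals $-1$, $-2$ or $-3$, all lying in $[-3,-1]$, so they automatically satisfy both target inequalities and can be ignored. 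Thus the whole problem reduces to controlling only the $k$ eigenvalues of $B_\Pi^*$: the condition $\partial_3\le -1$ is equivalent to $B_\Pi^*$ having at most two eigenvalues exceeding $-1$ (one being $\partial_1$), and $\partial_n\ge -3$ is equivalent to the smallest eigenvalue of $B_\Pi^*$ being $\ge -3$.

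I would establish these last two facts by sign analysis of the characteristic polynomial $p(x)=\det(xI-B_\Pi^*)$. Because the eigenvalues of $B_\Pi^*$ are real (they are eigenvalues of the symmetric matrix $D(G)$) and $\partial_1>0$ is a known root, evaluating $p$ at $x=-1$ and $x=-3$ localizes the remaining roots inside $[-3,-1]$. For concreteness, for $G=K_r\vee T_4(m,n)=K_r\vee(mK_2\cup nK_1)$ the divisor matrix on the blocks $(K_r,\ mK_2,\ nK_1)$ is
$$B_\Pi^*=\begin{pmatrix} r-1 & 2m & n\\ r & 4m-3 & 2n\\ r & 4m & 2n-2\end{pmatrix},$$
and one checks that $p(-1)$ and $p(-3)$ carry the required signs for all admissible $r,m,n$. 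The same scheme applies to the type-II graphs (where $r=0$ and the distance-$2$ common neighbours are supplied by the opposite side of the join) and to the members of $\mathcal{S}$, each of which again has diameter $2$ and an orbit partition with at most four parts.

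The main obstacle is that the parameters $r,s,t,m,n$ range over infinite sets, so the verification must be uniform rather than a finite check; I expect the bulk of the effort to be confirming that the sign pattern of $p(-1)$ and $p(-3)$ is stable over the entire admissible range, which amounts to showing that certain polynomials in the parameters do not change sign. Since all divisor matrices are of bounded size and the subclasses in Table~\ref{tab-1} differ only in which blocks occur, a single computation pattern handles them all; it therefore suffices to carry out the calculation in detail for a few representative members (with the full list of spectra recorded in Appendix~A) and invoke the identical argument for the remaining cases.
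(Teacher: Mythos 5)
Your proposal follows essentially the same route as the paper's proof: form the distance equitable partition into the blocks $K_r$, $mK_2$, $nK_1$ (etc.), extract the divisor-matrix eigenvalues via Theorem~\ref{thm-2-3} and Corollary~\ref{cor-2-4}, supply the remaining eigenvalues $-1,-2,-3$ from Lemma~\ref{lem-3-4}, Corollary~\ref{cor-3-6} and Corollary~\ref{cor-3-7}, and localize the divisor roots in $[-3,-1]$ by sign analysis of the characteristic polynomial at $-1$ and $-3$ (your matrix $B_\Pi^*$ for $K_r\vee T_4(m,n)$ is exactly the paper's $B_1$). The paper likewise carries out the computation in detail only for representative subclasses and defers the rest to Appendix~A, so your plan matches both in method and in level of rigor.
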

\begin{proof}
By calculating the distance spectra, we obtain that  $\{S(0,1),S(1,0)\}\cup\{T_i\vee T_j\mid 1\le i,j\le 3\}\subseteq\mathcal{G}[-3\le\partial_n,\partial_3\le-1]$ (see Appendix A). It suffices to  consider the remaining situations.

First we consider the graph $K_r\vee T_4(m,n)=K_r\vee(mK_2\cup nK_1)\in \mathcal{T}_1$, where $m+n\geq 2$. If $m,n\ne0$, it is easy to see that $\Pi$: $V(K_r\vee T_4(m,n))=V(K_r)\cup V(mK_2)\cup V(nK_1)$ is a distance equitable partition of $K_r\vee T_4(m,n)$ with the distance divisor matrix
$$
B_1=\left(
    \begin{array}{ccc}
      r-1 & 2m & n \\
      r & 4m-3 & 2n \\
      r & 4m & 2n-2 \\
    \end{array}
  \right).
$$
By Theorem \ref{thm-2-3}, $\det(xI-B_1)|\det(xI-D(K_r\vee T_4(m,n)))$, which implies that the roots of  the polynomial
\begin{eqnarray*}
f_1(x)&=&\det(xI-B_1)=x^3+(6-2n-4m-r)x^2\\
&+&(2mr-8n-5r-12m+nr+11)x-(8m+6n+6r-4mr-3nr-6)
\end{eqnarray*}
are  distance eigenvalues of $K_r\vee T_4(m,n)$.
Note that $f_1(-1)=2r(m+n-1)>0>f_1(-3)=-8m-2mr$. By Corollary \ref{cor-2-4}, the largest root of $f_1(x)$ is just the distance spectral radius of $K_r\vee T_4$, which is simple and greater than $0$. Therefore, by the function image of $f_1(x)$, the roots of $f_1(x)$ satisfy $-3<\partial_3<-1<\partial_2<\partial_1$. Moreover, by Lemma \ref{lem-3-4}, Corollary \ref{cor-3-6} and Corollary \ref{cor-3-7}, $-1$, $-2$ and $-3$ are distance eigenvalues of $G$ with multiplicities at least $r-1$, $n-1$ and $m-1$, respectively. Thus  $\mathrm{Spec}_D(K_r\vee T_4(m,n))=[\partial_1,\partial_2,(-1)^{m+r-1},(-2)^{n-1},\partial_3,(-3)^{m-1}]$ and so $K_r\vee T_4(m,n)\in \mathcal{G}[-3\le\partial_n,\partial_3\le-1]$. If $m=0$ or $n=0$, by using the same  method, we get the distance spectrum of $K_r\vee T_4(m,0)$ and $K_r\vee T_4(0,n)$ (see Appendix A), and thus we have $K_r\vee T_4(m,0),K_r\vee T_4(0,n)\in \mathcal{G}[-3\le\partial_n,\partial_3\le-1]$ as well.
Similarly, we get the distance spectra of $K_r\vee T_i$ for $i=1,2,3$ (see Appendix A). Clearly, all these graphs belong to $\mathcal{G}[-3\le\partial_n,\partial_3\le-1]$.

Next we consider the graph $S(m,n)=(K_5\cup K_1)\vee(mK_2\cup nK_1)\in \mathcal{S}$, where $m+n\ge2$. If $m,n\ne0$, it is easy to see that $\Pi$: $V(S(m,n))=V(K_5)\cup V(K_1)\cup V(mK_2)\cup V(nK_1)$ is a distance equitable partition of $S(m,n)$ with the distance divisor matrix
$$
B_2=\left(
    \begin{array}{cccc}
      4 & 2 & 2m & n \\
      10 & 0 & 2m & n \\
      5 & 1 & 4m-3 & 2n \\
      5 & 1 & 4m & 2n-2 \\
    \end{array}
  \right).
$$
By Theorem \ref{thm-2-3}, $\det(xI-B_2)|\det(xI-D(S(m,n)))$. It follows that the roots of the polynomial
$$f_2(x)=(x+3)(x^3-(2n+4m+2)x^2+(2n+8m-28)x+32m+24n-40)$$
are distance eigenvalues of $S(m,n)$.
One root of $f_2(x)$ is $-3$, and the others are the roots of
$g(x)=x^3-(2n+4m+2)x^2+(2n+8m-28)x+32m+24n-40$.
Note that $g(-1)=20(m+n)-15>0>g(-3)=-28n-1$ and the largest root of $g(x)$ is greater than $0$. Thus, by the function image of $g(x)$, the roots of $g(x)$ satisfy $-3<\partial_3<-1<\partial_2<\partial_1$. Moreover, by Lemma \ref{lem-3-4}, Corollary \ref{cor-3-6} and Corollary \ref{cor-3-7}, $-1$, $-2$ and $-3$ are distance eigenvalues of $G$ with multiplicity at least $m+4$, $n-1$ and $m-1$, respectively. Thus $\mathrm{Spec}_D(S(m,n))=[\partial_1,\partial_2,(-1)^{m+4},(-2)^{n-1},\partial_3,(-3)^m]$, and so $S(m,n)\in \mathcal{G}[-3\le\partial_n,\partial_3\le-1]$. If $m=0$ or $n=0$, by using the same method, we also get the distance spectrum of $S(m,n)$ (see Appendix A), and so $S(m,n)\in\mathcal{G}[-3\le\partial_n,\partial_3\le-1]$.

At last we consider the graph $T_1\vee T_4(m,n)=(K_4\cup K_1)\vee(mK_2\cup nK_1)\in \mathcal{T}_2$, where $m+n\ge2$. If $m,n\ne0$, it is easy to see that $\Pi$: $V(T_1\vee T_4(m,n))=V(K_4)\cup V(K_1)\cup V(mK_2)\cup V(nK_1)$ is a distance equitable partition of $T_1\vee T_4(m,n)$ with the distance divisor matrix
$$
B_3=\left(
    \begin{array}{cccc}
      3 & 2 & 2m & n \\
      8 & 0 & 2m & n \\
      4 & 1 & 4m-3 & 2n \\
      4 & 1 & 4m & 2n-2 \\
    \end{array}
  \right).
$$
By Theorem \ref{thm-2-3}, $\det(xI-B_3)|\det(xI-D(T_1\vee T_4(m,n)))$. Thus the roots of the polynomial
$$f_3(x)\!=\!x^4\!+\!(2-2n-4m)x^3\!-\!(6m+5n+25)x^2\!+\!(42m+22n-98)x\!+\!76m\!+\!57n\!-\!96.$$
are  distance eigenvalues of $T_1\vee T_4$.
Note that the derivative of $f_3(x)$ is
$$f'_3(x)=4x^3-(12m+6n-6)x^2-2(6m+5n+25)x+42m+22n-98.$$
By simple computation, we have $f_3(-1)=32m+32n-24>0$, $f_3(-3)=4m>0$, $f'_3(-3)=-30m-2n-2<0$, $f'_3(-1)=42m+26n-46>0$ and $f'_3(3)=-102m-62n-86<0$. By the function image of $f'_3(x)$, the roots of $f'_3(x)$, denoted by $\mu_1,\mu_2,\mu_3$, satisfy $-3<\mu_3<-1<\mu_2<3<\mu_1$. Therefore $f_3(x)$ monotonically decreases when $x<-3$. Since $f_3(-3)>0$, we have $f_3(x)>0$ for $x\le-3$. Moreover, since $-3<\mu_3<-1$ and $f_3(-1),f_3(-3)>0$, by the function image of $f_3(x)$, we obtain that two roots of $f_3(x)$ lie in the interval $(-3,-1)$, and the other two roots lie in $(-1,+\infty)$. Combining Lemma \ref{lem-3-4}, Corollary \ref{cor-3-6} and Corollary \ref{cor-3-7}, we obtain the distance spectrum of $T_1\vee T_4(m,n)$ (see Appendix A), and so $T_1\vee T_4(m,n)\in\mathcal{G}[-3\le\partial_n,\partial_3\le-1]$. If $m=0$ or $n=0$, by using the same method, we also get the distance spectrum of $T_1\vee T_4(m,0)$ and $T_1\vee T_4(0,n)$ (see Appendix A) and $T_1\vee T_4(m,0),T_1\vee T_4(0,n)\in \mathcal{G}[-3\le\partial_n,\partial_3\le-1]$ as well.
Similarly, we get the distance spectra of $T_i\vee T_4(m,n)$ for $i=2,3,4$ (see Appendix A), and all these graphs belong to $\mathcal{G}[-3\le\partial_n,\partial_3\le-1]$.

We complete the proof.
\end{proof}
\begin{remark}
\emph{In the proof of Lemma \ref{lem-3-16}, we compute the distance spectra of three subclasses in Table \ref{tab-1} in detail. In general, for a graph $G$ in Table \ref{tab-1}, by Theorem \ref{thm-2-3}, we first get some distance eigenvalues by analysing the roots of the polynomial of the corresponding distance divisor matrix. Then we get the other distance eigenvalues, which are $-1$, $-2$ or $-3$, by using Lemma \ref{lem-3-4}, Corollary \ref{cor-3-6} and Corollary \ref{cor-3-7}. At last, by noticing that the number of distance eigenvalues equals to the order of $G$, we get the distance spectrum of $G$. Repeating these process, we obtain the spectra of all graphs in Table \ref{tab-1} and we list them in Appendix A.}
\end{remark}
\begin{proof}[ Proof of Theorem\ref{thm-3-14}]
Lemma \ref{lem-3-15} shows that $\mathcal{G}[-3\le\partial_n,\partial_3\le-1]\subseteq \mathcal{S}\cup\mathcal{T}_1\cup\mathcal{T}_2$ and Lemma \ref{lem-3-16} shows that $\mathcal{S}\cup\mathcal{T}_1\cup\mathcal{T}_2\subseteq \mathcal{G}[-3\le\partial_n,\partial_3\le-1]$. Combining them, we get Theorem \ref{thm-3-14}.
\end{proof}

\section{The graphs with exactly two distance eigenvalues different from $-1,-3$}\label{s-4}

Let $\mathcal{F}$ denote the set of connected graphs with all but two adjacency eigenvalues equal to $\pm1$. Then $\mathcal{F}$ is a special family of graphs with exactly four distinct adjacency eigenvalues. Cioab\u{a} et al.\cite{Sebastian} completely characterized the graphs in $\mathcal{F}$. Motivated by their work, we try to characterize a special family of graphs with exactly four distinct distance eigenvalues, that is, the  graphs having exactly two distance eigenvalues (counting multiplicity) different from $-1$ and $-3$.  Let $\mathcal{H}$ denote the set of such graphs.  In this section, we will give a complete characterization of the graphs in $\mathcal{H}$.
\begin{lem}\label{lem-4-1}
The graph set $\mathcal{H}\subseteq\mathcal{G}[-3\le\partial_n,\partial_3\le-1]$.
\end{lem}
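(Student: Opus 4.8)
The plan is to reduce everything to the single inequality $\partial_2(G)>-1$; once that holds, both spectral conditions defining $\mathcal{G}[-3\le\partial_n,\partial_3\le-1]$ follow for free. Let $G\in\mathcal{H}$; by definition it is connected, so $D=D(G)$ is a nonnegative irreducible symmetric matrix and, by Perron--Frobenius, its largest eigenvalue $\partial_1>0$ is simple. In particular $\partial_1\notin\{-1,-3\}$, so $\partial_1$ is one of the two distance eigenvalues different from $-1$ and $-3$. First I would dispose of the easy membership requirements: $G$ is not complete, since $D(K_n)=J_{n\times n}-I_n$ has spectrum $\{n-1,(-1)^{n-1}\}$, which has only one eigenvalue outside $\{-1,-3\}$, so $K_n\notin\mathcal{H}$. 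Thus $G$ is non-complete and connected, which gives $\mathrm{diam}(G)\ge2$.

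The key step is to exhibit a short isometric path. Since $\mathrm{diam}(G)\ge2$, there exist vertices $u,w$ with $d_G(u,w)=2$, and any common neighbour $v$ of $u,w$ makes $uvw$ an induced $P_3$ with $d_G(u,w)=2$. As $\mathrm{diam}(P_3)=2<3$, Lemma \ref{lem-3-2} applies, so $D(P_3)=\left(\begin{smallmatrix}0&1&2\\1&0&1\\2&1&0\end{smallmatrix}\right)$ is a principal submatrix of $D$ and its eigenvalues interlace those of $D$. Since $\mathrm{Spec}_D(P_3)=\{1+\sqrt3,\,1-\sqrt3,\,-2\}$, interlacing yields $\partial_2(G)\ge1-\sqrt3>-1$, the inequality I was after.

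To conclude, note that $\partial_2>-1$ forces $\partial_2\notin\{-1,-3\}$, so $\partial_1$ and $\partial_2$ are two distance eigenvalues different from $-1$ and $-3$; as $G\in\mathcal{H}$ has exactly two such eigenvalues, these account for all of them, and hence $\partial_i\in\{-1,-3\}$ for every $i\ge3$. This gives at once $\partial_3\le-1$ and $\partial_n\ge-3$, and it also forces $n\ge4$: were $n=3$, then $G\cong P_3$ (the only non-complete connected graph on three vertices), whose eigenvalue $\partial_3=-2$ lies outside $\{-1,-3\}$, contradicting $G\in\mathcal{H}$. Assembling these facts---$G$ connected, non-complete, $n\ge4$, with $\partial_3\le-1$ and $\partial_n\ge-3$---shows $G\in\mathcal{G}[-3\le\partial_n,\partial_3\le-1]$. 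The entire argument hinges on the bound $\partial_2>-1$, and the one idea needed there is to interlace against the isometric $P_3$ rather than a larger or distance-distorting induced subgraph; everything else is bookkeeping dictated by the hypothesis that precisely two eigenvalues avoid $\{-1,-3\}$.
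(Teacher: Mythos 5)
Your proof is correct and uses essentially the same key idea as the paper: interlacing against an induced $P_3$ (whose second distance eigenvalue is $1-\sqrt3\approx-0.73>-1$) to force $\partial_2(G)>-1$, after which the remaining eigenvalues must all lie in $\{-1,-3\}$. The paper phrases this as a contradiction (if the second exceptional eigenvalue were $\le-1$ then $G$ would have no induced $P_3$ and hence be complete), whereas you argue directly and also check the $n\ge4$ and non-completeness conditions explicitly, but the substance is identical.
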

\begin{proof}
Let $G\in\mathcal{H}$, and let $\alpha,\beta$ ($\alpha>\beta$) be the two distance eigenvalues of $G$ different from $-1,-3$. We claim that $\beta>-1$. Otherwise, we have $\partial_2(G)=-1<-0.73=\partial_2(P_3)$ ($\mathrm{Spec}_D(P_3)=[2.73,-0.73,-2]$). This implies that $G$ contains no induced $P_3$ and so is a complete graph, which is impossible because complete graphs have only two distinct distance eigenvalues. Hence, $G\in \mathcal{G}[-3\le\partial_n,\partial_3\le-1]$ and our results follows.
\end{proof}

According to Lemma \ref{lem-4-1}, we determine all graphs belonging to $\mathcal{H}$ in the following theorem.
\begin{thm}\label{thm-4-2}
A connected graph $G$ has exactly two distance eigenvalues (counting multiplicity) different from $-1$ and $-3$ (i.e., $G\in \mathcal{H}$) if and only if \\
 (i) $G\cong(K_5\cup K_1)\vee mK_2$ ($m\ge1$), or\\
 (ii) $G\cong K_r\vee mK_2$ ($r\ge1$, $m\ge2$), or\\
 (iii) $G\cong m_1K_2\vee m_2K_2$ ($m_1,m_2\ge 2$).
\end{thm}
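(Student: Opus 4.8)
The plan is to piggyback on the classification already established. By Lemma~\ref{lem-4-1} we have $\mathcal{H}\subseteq\mathcal{G}[-3\le\partial_n,\partial_3\le-1]$, and by Theorem~\ref{thm-3-14} the latter set equals $\mathcal{S}\cup\mathcal{T}_1\cup\mathcal{T}_2$. Hence every candidate for $\mathcal{H}$ is one of the $32$ subclasses of Table~\ref{tab-1}, and the whole problem collapses to deciding, for each subclass, whether exactly two of its distance eigenvalues (counting multiplicity) differ from $-1$ and $-3$. The spectra needed for this are precisely those computed in Lemma~\ref{lem-3-16} and recorded in Appendix~A, so the argument is essentially a reading-off-and-counting once the right bookkeeping principle is isolated.

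That principle is the following. For a graph $G=K_r\vee F_1\vee F_2$ of the relevant shape, the distance spectrum splits into copies of $-1$ from cliques (Lemma~\ref{lem-3-4}), copies of $-3$ from the $mK_2$ blocks (Corollary~\ref{cor-3-7}), copies of $-2$ from star independent sets (Corollary~\ref{cor-3-6}), together with the ``main'' eigenvalues contributed by the characteristic polynomial of the distance divisor matrix $B^\ast$ via Theorem~\ref{thm-2-3}. Thus the number of eigenvalues off $\{-1,-3\}$ equals (the number of main roots not equal to $-1$ or $-3$) plus the multiplicity of $-2$. Since $-2$ itself lies off $\{-1,-3\}$ and the Perron root $\partial_1>0$ (Corollary~\ref{cor-2-4}) is always a main root, any subclass containing a summand $nK_1$ with $n\ge2$ (equivalently a star independent set of order $\ge2$) picks up a copy of $-2$ on top of $\partial_1$ and a further negative main root; checking the tabulated spectra confirms each such subclass has at least three eigenvalues off $\{-1,-3\}$. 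This one observation discards at a stroke every subclass built from $T_2=K_3\cup2K_1$, and every $S(m,n)$, $K_r\vee T_4(m,n)$, $T_i\vee T_4(m,n)$ with $n\ge2$.

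It remains to handle the subclasses in which every $nK_1$-part has size at most $1$, so that no $-2$ occurs and the off-count equals the number of main roots outside $\{-1,-3\}$. For sufficiency I would exhibit the natural distance equitable partition of each of (i)--(iii): two cells for $K_r\vee mK_2$ and for $m_1K_2\vee m_2K_2$, three cells for $(K_5\cup K_1)\vee mK_2$; write down the small divisor matrix; and verify its characteristic polynomial is positive at $-1$ and at $-3$, which together with $\partial_1>0$ forces the second main root into $(-1,0)$ and so keeps both main roots off $\{-1,-3\}$. The decisive point for family~(i) is that the $3\times3$ divisor matrix of $(K_s\cup K_1)\vee mK_2$ satisfies $\det(-3I-B^\ast)=4m(s-5)$, which vanishes precisely when $s=5$; thus exactly at $s=5$ one of the three main roots collapses onto $-3$, leaving only $\partial_1,\partial_2$ off $\{-1,-3\}$. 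For necessity I would run the complementary subclasses ($K_r\vee T_1$, $K_r\vee T_3$, $K_r\vee(mK_2\cup K_1)$, $S(0,1)$, $(m_1K_2\cup K_1)\vee m_2K_2$, and the like), showing in each case that $\det(-I-B^\ast)\ne0$ and $\det(-3I-B^\ast)\ne0$, so that all three (or more) main roots genuinely survive off $\{-1,-3\}$, ruling them out of $\mathcal{H}$.

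The main obstacle is this necessity half. One must rule out, uniformly across all surviving multi-cell subclasses, any accidental coincidence dropping a main root onto $-1$ or $-3$, since such a coincidence would spuriously create a new member of $\mathcal{H}$. The family $(K_5\cup K_1)\vee mK_2$ shows these coincidences really do occur, so the argument cannot be purely structural and must rest on the explicit evaluations of $\det(-I-B^\ast)$ and $\det(-3I-B^\ast)$ of the $4m(s-5)$ type; fortunately each is a determinant of order at most four, and the resulting complete list of spectra is already tabulated in Appendix~A.
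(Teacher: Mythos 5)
Your proposal is correct and follows essentially the same route as the paper: reduce to $\mathcal{G}[-3\le\partial_n,\partial_3\le-1]$ via Lemma~\ref{lem-4-1}, invoke the classification of Theorem~\ref{thm-3-14}, and then decide membership in $\mathcal{H}$ subclass by subclass from the spectra tabulated in Appendix~A. Your added bookkeeping (the multiplicity-of-$-2$ shortcut, which kills every subclass with an independent part of size at least two since $\partial_1>0$ and $\partial_2>-1$ are already off $\{-1,-3\}$, and the $\det(-3I-B^\ast)=4m(s-5)$ computation explaining why $s=5$ is special) is a sound and somewhat more systematic organization of the same case check.
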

\begin{proof}
By Appendix A, we see that $(K_5\cup K_1)\vee mK_2$, $K_r\vee mK_2$ and $m_1K_2\vee m_2K_2$ have exactly two distance eigenvalues different from $-1$ and $-3$. Thus the sufficiency follows. Conversely, let $G$ be a graph with exactly two distance eigenvalues different from $-1$ and $-3$, i.e., $G\in\mathcal{H}$. By Lemma \ref{lem-4-1}, we know that $G\in\mathcal{G}[-3\le\partial_n,\partial_3\le-1]$. Thus the necessity follows because $(K_5\cup K_1)\vee mK_2$, $K_r\vee mK_2$ and $m_1K_2\vee m_2K_2$ are the only graphs in $\mathcal{G}[-3\le\partial_n,\partial_3\le-1]$ with exactly two distance eigenvalues different from $-1$ and $-3$ by Appendix A.
\end{proof}
By Theorem \ref{thm-4-2}, we obtain the following result.
\begin{thm}\label{thm-4-3}
All graphs with exactly two distance eigenvalues (counting multiplicity) different from $-1$ and $-3$  are determined by their distance spectra.
\end{thm}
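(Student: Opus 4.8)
The plan is to combine Theorem \ref{thm-4-2} with the explicit distance spectra recorded in Appendix A. First I would fix $G\in\mathcal{H}$ and let $H$ be any graph with $\mathrm{Spec}_D(H)=\mathrm{Spec}_D(G)$ (necessarily connected, since its distance spectrum is defined). Having exactly two distance eigenvalues different from $-1$ and $-3$ is a purely spectral condition, so $H\in\mathcal{H}$ too; by Theorem \ref{thm-4-2} both $G$ and $H$ then lie among the three families $(K_5\cup K_1)\vee mK_2$, $K_r\vee mK_2$ and $m_1K_2\vee m_2K_2$. Hence it suffices to prove that the assignment of parameters to distance spectra is injective, i.e. that no two non-isomorphic graphs drawn from these three families are cospectral.

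The invariants I would read off the common spectrum are the order $n$, the multiplicity $a$ of $-1$, the multiplicity $b$ of $-3$, and the two exceptional eigenvalues $\partial_1>\partial_3$. The latter are exactly the roots of a $2\times2$ quadratic coming from the distance divisor matrix via Theorem \ref{thm-2-3} and Corollary \ref{cor-2-4}, and they are cleanly separated from $-1,-3$ because $\partial_1>0$ and $-3<\partial_3<-1$. From Appendix A one has $(a,b)=(m+4,m)$ for type (i), $(a,b)=(m+r-1,m-1)$ for type (ii), and $(a,b)=(m_1+m_2,\,m_1+m_2-2)$ for type (iii). Within a single family the parameters are then recovered at once: $b$ gives $m$ in type (i); $b$ and then $a$ give $m$ and $r$ in type (ii); and in type (iii), $a$ gives $m_1+m_2$ while the product $\partial_1\partial_3=12m_1m_2-12(m_1+m_2)+9$ gives $m_1m_2$, so $\{m_1,m_2\}$, and thus the graph up to isomorphism, is determined.

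The hard part is the \emph{cross}-family separation, since the multiplicity data alone do not distinguish the families. Indeed $a-b$ equals $4$ in type (i), equals $r$ in type (ii) and equals $2$ in type (iii); thus type (ii) with $r=4$ collides with type (i) and type (ii) with $r=2$ collides with type (iii), and in each collision even $n$, $a$, $b$ and the sum $\partial_1+\partial_3$ coincide. To break these ties I would invoke the product $\partial_1\partial_3$, i.e. the constant term of the $2\times2$ divisor characteristic polynomial. Matching multiplicities in the (i)/(ii)$_{r=4}$ collision forces the type-(ii) parameter to be $m+1$, whence $\partial_1\partial_3=16m-20$ in type (i) against $4m-5$ in type (ii); equality would require $12m=15$, impossible for integral $m$. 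In the (ii)$_{r=2}$/(iii) collision one finds $\partial_1\partial_3=-3$ in type (ii), whereas in type (iii) $\partial_1\partial_3=12(m_1-1)(m_2-1)-3\ge 9$ because $m_1,m_2\ge2$. This integrality-and-positivity argument on the constant term is the crux; once it is established, the spectrum forces $H\cong G$, so every graph in $\mathcal{H}$ is determined by its distance spectrum.
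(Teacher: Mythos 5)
Your proposal is correct and follows essentially the same route as the paper: reduce to Theorem \ref{thm-4-2}, recover the parameters from the multiplicities of $-1$ and $-3$, and rule out the two possible cross-family collisions by comparing a symmetric function of the two exceptional eigenvalues (you use the product $\partial_1\partial_3$ from the constant term of the divisor quadratic, the paper compares the spectral radius --- equivalent here, since the zero trace pins down the sum). One harmless slip: for graphs in $\mathcal{H}$ the smaller exceptional eigenvalue actually lies in $(-1,0)$ rather than in $(-3,-1)$ (cf.\ Lemma \ref{lem-4-1}), but nothing in your argument depends on this.
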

\begin{proof}
By Theorem \ref{thm-4-2},  every graph in $\mathcal{H}$ has the form  $(K_5\cup K_1)\vee mK_2$ ($m\ge1$), or $K_r\vee mK_2$ ($r\ge1$, $m\ge2$), or $m_1K_2\vee m_2K_2$ ($m_1,m_2\ge 2$).
By Appendix A, we get the distance spectra of these graphs:
\begin{equation}\label{eq-proof-1}
\left\{\begin{array}{l}
\mathrm{Spec}_D((K_5\cup K_1)\vee mK_2)\!=\![2m\!+\!2\pm2\sqrt{m^2\!-\!2m\!+\!6},(-1)^{m+4},(-3)^m]\\
\mathrm{Spec}_D(K_r\vee mK_2)\!=\![2m\!+\!\frac{r}{2}\!-\!2\pm\frac{\sqrt{(4m\!-\!2)^2\!+\!(r\!+\!2)^2\!-\!4}}{2},(-1)^{m+r-1},(-3)^{m-1}]\\
\mathrm{Spec}_D(m_1K_2\vee m_2K_2)\!=\![2m_1\!+\!2m_2\!-\!3\pm2\sqrt{m_1^2\!-\!m_1m_2\!+\!m_2^2},(-1)^{m_1+m_2},(-3)^{m_1+m_2-2}]\end{array}
\right.
\end{equation}
It is easy to verify that any two  graphs of the same form with different parameters cannot share the same distance spectrum. Thus we only need to consider the distance spectra of these graphs of distinct form.

First suppose that $\textrm{Spec}_D((K_5\cup K_1)\vee mK_2)=\mathrm{Spec}_D(K_r\vee m'K_2)$. By counting the multiplicities of $-3$ and $-1$, we have $m'-1=m$ and $m'+r-1=m+4$, which leads to $m'=m+1$ and $r=4$. Furthermore, by comparing the distance spectral radius we get
$$2m+2+2\sqrt{m^2-2m+16}=2m'+\frac{r}{2}-2+\frac{\sqrt{(4m'-2)^2+(r+2)^2-4}}{2}.$$
Putting $m'=m+1$ and $r=4$ into the above equation, we obtain  $m=\frac{5}{4}$, which is impossible.

Next suppose that $\textrm{Spec}_D((K_5\cup K_1)\vee mK_2)=\mathrm{Spec}_D(m_1K_2\vee m_2K_2)$. By counting the multiplicities of $-3$ and $-1$, we have
$m_1+m_2-2=m$  and $m_1+m_2=m+4$.
Therefore, $m+2=m+4$, a contradiction.

At last we suppose that $\mathrm{Spec}_D(K_r\vee m'K_2)=\mathrm{Spec}_D(m_1K_2\vee m_2K_2)$. By counting the multiplicities of $-3$ and $-1$, we have $m_1+m_2-2=m'-1$ and $m_1+m_2=m'+r-1$, which implies that $m_1+m_2=m'+1$ and $r=2$. Furthermore, by comparing the distance spectral radius, we have
$$2m'+\frac{r}{2}-2+\frac{\sqrt{(4m'-2)^2+(r+2)^2-4}}{2}=2m_1+2m_2-3+2\sqrt{m_1^2-m_1m_2+m_2^2}.$$
Putting $m_1+m_2=m'+1$ and $r=2$ into the above equation, we obtain that $m_1m_2=m'=m_1+m_2-1$, which is impossible because $m_1m_2\ge m_1+m_2$ due to $m_1,m_2\ge2$.

We complete the proof.
\end{proof}
\begin{remark}
\emph{From Theorem \ref{thm-4-3}, we know that  $\mathcal{H}$ has three classes of graphs: $(K_5\cup K_1)\vee mK_2\in \mathcal{S}$, $K_r\vee mK_2\in \mathcal{T}_1$ and $m_1K_2\vee m_2K_2\in \mathcal{T}_2$. They are all determined by distance spectra. In fact, we have confirmed some other graphs in Appendix A are also DDS. We guess that all the graphs in $G[-3\le\partial_n,\partial_3\le-1]$ are determined by their distance spectra.  We believe, if necessary,  one can verify the guess by comparing the distance spectra of the graphs in Appendix A as in the proof of Theorem \ref{thm-4-3}. Here we abandon the verifications and leave them to someone interested.}
\end{remark}

Notice that the friendship graph $F_k=K_1\vee kK_2$ is included in $K_r\vee mK_2$. The following result follows from Theorem \ref{thm-4-3} immediately.
\begin{cor}\label{cor-4-4}
The friendship graph $F_k=K_1\vee kK_2$ is determined by its distance spectrum.
\end{cor}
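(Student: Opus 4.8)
The plan is to identify the friendship graph with a member of the family already classified in Theorem~\ref{thm-4-2} and then quote the spectral determination of Theorem~\ref{thm-4-3}. The crucial observation is the structural identity $F_k = K_1 \vee kK_2$: the vertex common to all $k$ triangles plays the role of the $K_1$, while the $k$ triangles, stripped of that apex, become the $k$ independent edges of $kK_2$, each completely joined to the apex. Hence $F_k$ is precisely the graph $K_r \vee mK_2$ of Theorem~\ref{thm-4-2}(ii) with parameters $(r,m) = (1,k)$.

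For $k \ge 2$ the argument is then immediate. With $r = 1$ and $m = k \ge 2$, the graph $F_k$ satisfies the hypotheses of case (ii) of Theorem~\ref{thm-4-2}, so $F_k \in \mathcal{H}$; that is, $F_k$ has exactly two distance eigenvalues (counting multiplicity) different from $-1$ and $-3$. Indeed, substituting $(r,m)=(1,k)$ into the list (\ref{eq-proof-1}) gives $\mathrm{Spec}_D(F_k) = [\,2k - \tfrac{3}{2} \pm \tfrac{1}{2}\sqrt{(4k-2)^2 + 5},\,(-1)^{k},\,(-3)^{k-1}\,]$, whose only entries outside $\{-1,-3\}$ are the two roots of the corresponding quadratic. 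Since Theorem~\ref{thm-4-3} states that every graph in $\mathcal{H}$ is determined by its distance spectrum, I conclude that $F_k$ is DDS for all $k \ge 2$.

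It then remains to settle the boundary value $k = 1$, which escapes Theorem~\ref{thm-4-2}(ii) because that case requires $m \ge 2$. Here $F_1 = K_1 \vee K_2 = K_3$, with distance matrix $J_3 - I_3$ and distance spectrum $[2,(-1)^2]$. As this spectrum consists of three eigenvalues, any cospectral mate must have order $3$, hence be one of the two connected graphs on three vertices, $P_3$ or $K_3$; since $\mathrm{Spec}_D(P_3) = [2.73,-0.73,-2] \ne [2,(-1)^2]$, the only realization is $K_3 = F_1$, which is therefore DDS as well.

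As for difficulty, there is essentially no obstacle: all the substantive work has already been carried out in Theorems~\ref{thm-4-2} and~\ref{thm-4-3}. The only points demanding a moment of care are the structural recognition $F_k = K_1 \vee kK_2$ together with the parameter match $(r,m)=(1,k)$, and the elementary separate treatment of $k=1$, which lies outside the $m \ge 2$ hypothesis of Theorem~\ref{thm-4-2}(ii).
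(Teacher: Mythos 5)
Your proof is correct and takes essentially the same route as the paper, which simply observes that $F_k=K_1\vee kK_2$ is an instance of $K_r\vee mK_2$ and invokes Theorem~\ref{thm-4-3}. The one difference is that you explicitly handle $k=1$ (where the $m\ge 2$ hypothesis of Theorem~\ref{thm-4-2}(ii) fails and $F_1=K_3\notin\mathcal{H}$), a boundary case the paper's one-line proof silently passes over; your separate verification that $K_3$ is DDS among the two connected graphs on three vertices is a harmless and sensible addition.
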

Corollary \ref{cor-4-4} provides of witness that distance spectrum is stronger than adjacency spectrum since the friendship graph $F_{16}$ is not DAS.

\begin{flushleft}
\textbf{Acknowledgments}
\end{flushleft}
The authors are grateful to the referees for their helpful comments  and suggections  that have improved our result in this paper.

\section*{Appendix A. Spectra of graphs in $\mathcal{G}[-3\le\partial_n,\partial_3\le-1]$}
\renewcommand{\arraystretch}{0.95}
\begin{sidewaystable}[H]

\centering
\scriptsize
\begin{tabular}{lll}
  \toprule
Graphs &Distance Spectra& $f(x)$\\
  \midrule
  $S(0,1)=K_1\vee(K_5\cup K_1)$&$[7.66,-0.71,(-1)^4,-2.96]$&$\backslash$ \\
  $S(1,0)=K_2\vee(K_5\cup K_1)$&$[8.47,-0.47,(-1)^5,-3]$&$\backslash$\\
  $S(m,n)=(K_5\cup K_1)\vee(mK_2\cup nK_1)$ $(m,n\ne0)$&$[\partial_1,\partial_2,(-1)^{m+4},(-2)^{n-1},\partial_3,(-3)^m]$&[$x^3-(2n+4m+2)x^2+(2n+8m-28)x+(32m+24n-40)$]\\
  $S(m,0)=(K_5\cup K_1)\vee mK_2$ $(m\ge2)$&$[\partial_1,\partial_2,(-1)^{m+4},(-3)^m]$&[$-x^2+(4m+4)x+20-16m$]\\
  $S(0,n)=(K_5\cup K_1)\vee nK_1$ $(n\ge2)$&$[\partial_1,\partial_2,(-1)^4,\partial_3,(-2)^{n-1}]$&[$x^3-(2n+2)x^2+(2n-28)x+(24n-40)$]\\
  $K_r\vee T_1=K_r\vee(K_4\cup K_1)$&$[\partial_1,\partial_2,(-1)^{r+2},\partial_3](\partial_3>-3)$&[$x^3-(r+2)x^2-(2r+19)x+(3r-16)$]\\
  $K_r\vee T_2=K_r\vee(K_3\cup 2K_1)$&$[\partial_1,\partial_2,(-1)^{r+1},-2,\partial_3](\partial_3>-3)$&[$x^3-(r+3)x^2-(r+24)x+(6r-20)$]\\
  $K_r\vee T_3=K_r\vee(K_3\cup K_1)$&$[\partial_1,\partial_2,(-1)^{r+1},\partial_3](\partial_3>-3)$&[$x^3-(r+1)x^2-(2r+14)x+(2r-12)$]\\
  $K_r\vee T_4(m,n)=K_r\vee(mK_2\cup nK_1)$ $(n,m\ne0)$&$[\partial_1,\partial_2,(-1)^{m+r-1},(-2)^{n-1},\partial_3,(-3)^{m-1}]$&[$x^3\!+\!(6\!-\!2n\!-\!4m\!-\!r)x^2\!+\!(2mr\!-\!8n\!-\!5r\!-\!12m\!+\!nr\!+\!11)x\!-\!(8m\!+\!6n\!+\!6r\!-\!4mr\!-\!3nr\!-\!6)$]\\
  $K_r\vee T_4(m,0)=K_r\vee mK_2$ $(m\ge2)$&$[\partial_1,\partial_2,(-1)^{m+r-1},(-3)^{m-1}]$&[$x^2+(4-r-4m)x+(2mr-4m-3r+3)$]\\
  $K_r\vee T_4(0,n)=K_r\vee nK_1$ $(n\ge2)$&$[\partial_1,\partial_2,(-1)^{r-1},(-2)^{n-1}]$&[$x^2+(3-r-2n)x+(nr-2n-2r+2)$]\\
  $T_1\vee T_1=(K_4\cup K_1)\vee(K_4\cup K_1)$&$[10.71,1,(-1)^6,-2.71,-3]$&$\backslash$\\
  $T_1\vee T_2=(K_4\cup K_1)\vee(K_3\cup 2K_1)$&$[11.32,1.46,(-1)^5,-2,-2.78,-3]$&$\backslash$\\
  $T_1\vee T_3=(K_4\cup K_1)\vee(K_3\cup K_1)$&$[9.65,0.85,(-1)^5,-2.6,-2.9]$&$\backslash$\\
  $T_1\vee T_4(m,n)=(K_4\cup K_1)\vee(mK_2\cup nK_1)$ $(m,n\ne0)$&$[\partial_1,\partial_2,(-1)^{m+3},(-2)^{n-1},\partial_3,\partial_4,(-3)^{m-1}]$&
  [$x^4\!+\!(2\!-\!2n\!-\!4m)x^3\!-\!(6m\!+\!5n\!+\!25)x^2\!+\!(42m\!+\!22n\!-\!98)x\!+\!76m\!+\!57n\!-\!96$]\\
  $T_1\vee T_4(m,0)=(K_4\cup K_1)\vee mK_2$ $(m\ge2)$&$[\partial_1,\partial_2,(-1)^{m+3},\partial_3,(-3)^{m-1}]$&
  [$x^3-4mx^2+(2m-25)x+38m-48$]\\
  $T_1\vee T_4(0,n)=(K_4\cup K_1)\vee nK_1$ $(n\ge2)$&$[\partial_1,\partial_2,(-1)^3,(-2)^{n-1},\partial_3](\partial_3>-3)$&
  [$x^3-(2n+1)x^2+(n-22)x+19n-32$]\\
  $T_2\vee T_2=(K_3\cup 2K_1)\vee(K_3\cup2K_1)$&$[11.87,2,(-1)^4,(-2)^2,-2.87,-3]$&
  $\backslash$\\
  $T_2\vee T_3=(K_3\cup 2K_1)\vee(K_3\cup K_1)$&$[10.34,1.25,(-1)^4,-2,-2.63,-2.95]$&
  $\backslash$\\
  $T_2\vee T_4(m,n)=(K_3\cup 2K_1)\vee(mK_2\cup nK_1)$ $(m,n\ne0$)&$[\partial_1,\partial_2,(-1)^{m+2},(-2)^n,\partial_3,\partial_4,(-3)^{m-1}]$&
  [$x^4\!+\!(1\!-\!2n\!-\!4m)x^3\!-\!(2m\!+\!3n\!+\!34)x^2+(64m\!+\!35n\!-\!124)x\!+\!104m\!+\!78n\!-\!120$]\\
  $T_2\vee T_4(m,0)=(K_3\cup 2K_1)\vee mK_2$ $(m\ge2)$&$[\partial_1,\partial_2,(-1)^{m+2},-2,\partial_3,(-3)^{m-1}]$&
  [$x^3-(4m+1)x^2+(6m-32)x+52m-60$]\\
  $T_2\vee T_4(0,n)=(K_3\cup 2K_1)\vee nK_1$ $(n\ge2)$&$[\partial_1,\partial_2,(-1)^2,(-2)^n,\partial_3](\partial_3>-3)$&
  [$x^3-(2n+2)x^2+(3n-28)x+26n-40$]\\
  $T_3\vee T_3=(K_3\cup K_1)\vee(K_3\cup K_1)$&$[8.57,0.73,(-1)^4,-2.57,-2.73]$&$\backslash$\\
  $T_3\vee T_4(m,n)=(K_3\cup K_1)\vee(mK_2\cup nK_1)$ $(m,n\ne0)$&$[\partial_1,\partial_2,(-1)^{m+2},(-2)^{n-1},\partial_3,\partial_4,(-3)^{m-1}]$&
  [$x^4\!+\!(3\!-\!2n\!-\!4m)x^3\!-\!(8m\!+\!6n\!+\!16)x^2\!+\!(28m\!+\!14n\!-\!72)x\!+\!56m\!+\!42n\!-\!72$]\\
  $T_3\vee T_4(m,0)=(K_3\cup K_1)\vee mK_2$ $(m\ge2)$&$[\partial_1,\partial_2,(-1)^{m+2},\partial_3,(-3)^{m-1}]$&
  [$x^3-(4m-1)x^2-18x+28m-36$]\\
  $T_3\vee T_4(0,n)=(K_3\cup K_1)\vee nK_1$ $(n\ge2)$&$[\partial_1,\partial_2,(-1)^2,(-2)^{n-1},\partial_3](\partial_3>-3)$&
  [$x^3-2nx^2-16x+14n-24$]\\
  \tabincell{l}{$T_4(m_1,n_1)\vee T_4(m_2,n_2)=(m_1K_2\cup n_1K_1)\vee(m_2K_2\cup n_2K_1)$\\($m_1,m_2,n_1,n_2\ne0$)}&$[\partial_1,\partial_2,(-1)^{m_1+m_2},(-2)^{n_1+n_2-2},\partial_3,\partial_4,(-3)^{m_1+m_2-2}]$&
  \tabincell{l}{[$x^4\!+\!(10\!-\!4(m_1\!+\!m_2)\!-\!2(n_1\!+\!n_2))x^3\!+\!(12m_1m_2\!-\!28(m_1\!+\!m_2)\!-\!16(n_1\!+\!n_2)$\\
  $+6(m_1n_2\!+\!m_2n_1)\!+\!37)x^2\!+\!(48m_1m_2\!-\!64(m_1\!+\!m_2)-42(n_1\!+\!n_2)\!+\!30(m_1n_2\!+\!m_2n_1)\!$\\$+\!18n_1n_2\!+\!60)x
  -(48(m_1\!+\!m_2\!-\!m_1m_2)\!+\!36(n_1\!+\!n_2)-36(m_1n_2\!+\!m_2n_1)\!-\!27n_1n_2\!+\!36)$]}\\
  \tabincell{l}{$T_4(m_1,n_1)\vee T_4(m_2,0)=(m_1K_2\cup n_1K_1)\vee m_2K_2$\\$(m_1,n_1\ne0,m_2\ge2)$}&$[\partial_1,\partial_2,(-1)^{m_1+m_2},(-2)^{n_1-1},\partial_3,(-3)^{m_1+m_2-2}]$&
  \tabincell{l}{[$x^3\!+\!(8\!-\!4(m_1\!+\!m_2)\!-\!2n_1)x^2\!+\!(12m_1m_2\!-\!20(m_1\!+\!m_2)\!-\!12n_1\!+\!6m_2n_1\!+\!21)x$\\$-\!24(m_1\!+\!m_2)\!+\!18m_2n_1\!+\!24m_1m_2\!-\!18n_1\!+\!18$]}\\
  \tabincell{l}{$T_4(m_1,n_1)\vee T_4(0,n_2)=(m_1K_2\cup n_1K_1)\vee n_2K_1$\\$(m_1,n_1\ne0,n_2\ge2)$}&$[\partial_1,\partial_2,(-1)^{m_1},(-2)^{n_1+n_2-2},\partial_3,(-3)^{m_1-1}]$&
  \tabincell{l}{[$x^3\!+\!(7\!-\!2(n_1\!+\!n_2)\!-\!4m_1)x^2\!+\!(6m_1n_2\!-\!10(n_1\!+\!n_2)\!-\!16m_1\!+\!3n_1n_2\!+\!16)x$\\$-\!12(n_1\!+\!n_2)\!+\!12m_1n_2\!+\!9n_1n_2\!-\!16m_1\!+\!12$]}\\
  $T_4(m_1,0)\vee T_4(m_2,0)=m_1K_2\vee m_2K_2$ $(m_1,m_2\ge2)$&$[\partial_1,\partial_2,(-1)^{m_1+m_2},(-3)^{m_1+m_2-2}]$&$[x^2\!+\!(6\!-\!4(m_1\!+\!m_2))x\!-\!12(m_1\!+\!m_2)\!+\!12m_1m_2\!+\!9]$\\
  $T_4(m_1,0)\vee T_4(0,n_2)=m_1K_2\vee n_2K_1$ $(m_1,n_2\ge2)$&$[\partial_1,\partial_2,(-1)^{m_1},(-2)^{n_2-1},(-3)^{m_1-1}]$&$[x^2\!+\!(6\!-\!4(m_1\!+\!m_2))x\!-\!12(m_1\!+\!m_2)\!+\!12m_1m_2\!+\!9]$\\
  $T_4(0,n_1)\vee T_4(0,n_2)=n_1K_2\vee n_2K_1$ $(n_1,n_2\ge2)$&$[\partial_1,\partial_2,(-2)^{n_1+n_2-2}]$&$[x^2+(4-2(n_1+n_2))x-4(n_1+n_2)+3n_1n_2+4]$\\
  \bottomrule
 \end{tabular}
\end{sidewaystable}
{\small}

\end{document}